\setlist[enumerate]{leftmargin=*}
\theoremstyle{plain}
\newtheorem{thm}{Theorem}[section]
\newtheorem{lem}[thm]{Lemma}
\newtheorem{prp}[thm]{Proposition}
\theoremstyle{definition}
\newtheorem{rem}[thm]{Remark}
\numberwithin{equation}{section}
\newcommand{\NN}{\mathbb{N}}
\newcommand{\ZZ}{\mathbb{Z}}
\newcommand{\RR}{\mathbb{R}}
\newcommand{\CC}{\mathbb{C}}
\newcommand{\Rpos}{{\RR^+}}
\newcommand{\Rnon}{{\RR^+_0}}
\newcommand{\CZ}{Calder\'on--Zygmund}
\newcommand{\CaC}{Carnot--Cara\-th\'eodory}
\newcommand{\ndi}{\mathrm{d}}  
\newcommand{\di}{\,\ndi}    
\newcommand{\fdim}{q}    
\newcommand{\vfG}{X}
\newcommand{\vfA}{\breve{X}_0}
\newcommand{\vfN}{\breve{X}}
\newcommand{\step}{S}
\newcommand{\lie}{\mathfrak}
\DeclareMathOperator{\tr}{tr}
\DeclareMathOperator{\arccosh}{arccosh}
\DeclareMathOperator{\Span}{span}
\newcommand{\dist}{\varrho}
\newcommand{\loc}{\mathrm{loc}}
\begin{document}

\title[Riesz transforms on solvable extensions of stratified groups]{Riesz transforms on solvable extensions \\ of stratified groups}

\author[A. Martini]{Alessio Martini}
\address[A. Martini]{School of Mathematics \\ University of Birmingham \\
Edgbaston \\ Birmingham \\ B15 2TT \\ United Kingdom}
\email{a.martini@bham.ac.uk}

\author[M. Vallarino]{Maria Vallarino}
\address[M. Vallarino]{Dipartimento di Scienze Matematiche ``Giuseppe Luigi Lagrange'', Dipartimento di Eccellenza 2018-2022
\\ Politecnico di Torino\\
Corso Duca degli Abruzzi 24\\ 10129 Torino\\ Italy}
\email{maria.vallarino@polito.it}

\subjclass[2010]{22E30, 42B20, 42B30}
\keywords{Riesz transform, sub-Laplacian, solvable group, singular integral operator, Hardy space, heat kernel}

\begin{abstract}
Let $G = N \rtimes A$, where $N$ is a stratified group and $A = \mathbb{R}$ acts on $N$ via automorphic dilations. Homogeneous sub-Laplacians on $N$ and $A$ can be lifted to left-invariant operators on $G$ and their sum is a sub-Laplacian $\Delta$ on $G$. Here we prove weak type $(1,1)$, $L^p$-boundedness for $p \in (1,2]$ and $H^1 \to L^1$ boundedness of the Riesz transforms $Y \Delta^{-1/2}$ and $Y \Delta^{-1} Z$, where $Y$ and $Z$ are any horizontal left-invariant vector fields on $G$, as well as the corresponding dual boundedness results. At the crux of the argument are large-time bounds for spatial derivatives of the heat kernel, which are new when $\Delta$ is not elliptic.
\end{abstract}

\maketitle

\section{Introduction}

Let $N$ be a stratified Lie group of homogeneous dimension $Q$. Let $G$ be the semidirect product $N \rtimes A$, where $A=\RR$ acts on $N$ via automorphic dilations. The group $G$ is a solvable extension of $N$ that is not unimodular and has exponential volume growth.
For all $p\in [1,\infty]$, let $L^p(G)$ denote the $L^p$ space with respect to a right Haar measure $\mu$ on $G$.

Consider a system $\vfN_1,\dots,\vfN_\fdim$ of left-invariant vector fields on $N$ that form a basis of the first layer of the Lie algebra of $N$ and let $\vfA$ be the standard basis of the Lie algebra of $A$. The vector fields $\vfA$ on $A$ and $\vfN_1,\dots,\vfN_\fdim$ on $N$ can be lifted to left-invariant vector fields $\vfG_0,\vfG_1,\dots,\vfG_\fdim$ on $G$ which generate the Lie algebra of $G$ and define a sub-Riemannian structure on $G$ with associated left-invariant \CaC\ distance $\dist$. A horizontal left-invariant  vector field on $G$ is any $\RR$-linear combination of $\vfG_0,\vfG_1,\dots,\vfG_\fdim$.

Let $\Delta$ be the left-invariant sub-Laplacian on $G$ defined by
\begin{equation}\label{eq:sub-Laplacian}
\Delta = -\sum_{j=0}^\fdim \vfG_j^2.
\end{equation}
The operator $\Delta$ extends uniquely to a positive self-adjoint operator on $L^2(G)$. Here is our main result.

\begin{thm}\label{thm:main}
Let $Y,Z$ be any horizontal left-invariant vector fields on $G$.
\begin{enumerate}[label=(\roman*)]
\item\label{en:main_first} The first-order Riesz transform $Y \Delta^{-1/2}$ is of weak type $(1,1)$, bounded on $L^p(G)$ for $p \in (1,2]$ and bounded from $H^1(G)$ to $L^1(G)$.
\item\label{en:main_second} The second-order Riesz transform $Y \Delta^{-1} Z$ is of weak type $(1,1)$, bounded on $L^p(G)$ for $p \in (1,\infty)$ and bounded from $H^1(G)$ to $L^1(G)$ and from $L^\infty(G)$ to $BMO(G)$.
\end{enumerate}
\end{thm}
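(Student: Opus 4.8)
The plan is to realise both Riesz transforms as integral operators, establish their $L^2(G)$-boundedness by spectral theory, and then obtain the endpoint estimates from the Calder\'on--Zygmund theory adapted to the exponential geometry of $G$; the key analytic input is a new large-time bound for the spatial derivatives of the heat kernel of $\Delta$. \emph{Reductions.} If $Y=\sum_{j=0}^\fdim a_j\vfG_j$ is horizontal then $\|Yf\|_2^2\le(\sum_j a_j^2)\,\langle\Delta f,f\rangle$, so $Y\Delta^{-1/2}$ is bounded on $L^2(G)$; writing $Y\Delta^{-1}Z=(Y\Delta^{-1/2})(\Delta^{-1/2}Z)$ and using that each $\vfG_j^\ast=-\vfG_j+\text{const}$ together with the strictly positive $L^2$-spectral gap of $\Delta$ (a consequence of the drift coming from the $A$-action, which also makes $\Delta^{-1/2}$ and $\Delta^{-1}$ bounded on $L^2(G)$), one gets $L^2(G)$-boundedness of $Y\Delta^{-1}Z$ as well. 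The adjoint of $Y\Delta^{-1}Z$ is again of the form (horizontal)$\,\Delta^{-1}\,$(horizontal) plus the $L^2$-bounded tails $\Delta^{-1}Y^\ast$ and $\Delta^{-1}Z^\ast$; hence, once weak type $(1,1)$ and $H^1(G)\to L^1(G)$ boundedness are proved for all operators $Y\Delta^{-1}Z$, the $L^\infty(G)\to BMO(G)$ bound follows by duality, and the full range $p\in(1,\infty)$ by Marcinkiewicz interpolation and a further duality. For $Y\Delta^{-1/2}$ only $p\in(1,2]$ is claimed, and this follows by interpolating its weak type $(1,1)$ (or $H^1\to L^1$) bound with $L^2$-boundedness. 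Thus it suffices to prove \textbf{weak type $(1,1)$} and \textbf{$H^1(G)\to L^1(G)$} boundedness for $Y\Delta^{-1/2}$ and for $Y\Delta^{-1}Z$.

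\emph{Local/global splitting.} By subordination $\Delta^{-1/2}=c\int_0^\infty e^{-t\Delta}\,t^{-1/2}\di t$ and $\Delta^{-1}=\int_0^\infty e^{-t\Delta}\di t$, so the two operators are integral operators with kernels $c\int_0^\infty (Y p_t)(x,y)\,t^{-1/2}\di t$ and $\int_0^\infty (YZ p_t)(x,y)\di t$, where $p_t$ is the heat kernel of $\Delta$ and the vector fields act in the appropriate variables. Cut each time integral at $t=1$, writing the kernel as $K_{\mathrm{loc}}+K_{\mathrm{glob}}$. Since $G$ is locally doubling and $\Delta$ is sub-elliptic, the heat kernel satisfies the usual Gaussian upper bound and first- and second-order Gaussian gradient bounds at scales $t\le 1$; consequently $K_{\mathrm{loc}}$ obeys the standard size and H\"ormander estimates on balls of radius $\le 1$, so the operators with kernel $K_{\mathrm{loc}}$, being $L^2$-bounded, are of weak type $(1,1)$ and bounded from $H^1(G)$ to $L^1(G)$ by the localised Calder\'on--Zygmund theory in the sense of Coifman--Weiss.

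\emph{The global part.} It remains to treat $K_{\mathrm{glob}}$, using the Calder\'on--Zygmund machinery for exponential-growth groups: a distinguished family of ``Calder\'on--Zygmund sets'' interpolating between small balls and dyadic sets adapted to the $A$-direction, Hebisch--Steger's theorem for weak type $(1,1)$, and Vallarino's atomic $H^1(G)$ for the $H^1\to L^1$ bound. Since $K_{\mathrm{glob}}$ is $L^2$-bounded, it suffices to verify the integrated H\"ormander condition
\[
\sup_{R}\ \sup_{y,y'\in R}\ \int_{G\setminus R^\ast}\bigl|K_{\mathrm{glob}}(x,y)-K_{\mathrm{glob}}(x,y')\bigr|\di\mu(x)<\infty ,
\]
$R$ ranging over the Calder\'on--Zygmund sets and $R^\ast$ a fixed dilate. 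Estimating the difference in $y$ by a mean value inequality reduces this to controlling $\int_1^\infty\|\nabla_y Yp_t\|\,t^{-1/2}\di t$ and $\int_1^\infty\|\nabla_y YZp_t\|\di t$ with suitable spatial weights, i.e.\ to \emph{large-time} derivative estimates for $p_t$. The key new ingredient is a bound, valid for $t\ge 1$ and not relying on ellipticity, of the shape
\[
|Yp_t(x,y)|,\ |YZp_t(x,y)|\ \lesssim\ P(t)\,e^{-bt}\,\Phi_t(\dist(x,y)),
\]
with $P$ polynomial, $b>0$ the spectral-gap exponent and $\Phi_t$ a Gaussian-type profile, together with the companion estimates for the $y$-gradient and its increments; these make the $t$-integral converge and force the spatial mass of $K_{\mathrm{glob}}$ to decay fast enough along the Calder\'on--Zygmund sets at infinity to yield the H\"ormander condition, and Theorem~\ref{thm:main} follows.

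\emph{The main obstacle} is precisely establishing these large-time derivative bounds: in the elliptic case one may invoke parabolic regularity or explicit formulae, but when $\Delta$ is genuinely sub-elliptic this is unavailable, and one must instead exploit the semidirect-product structure $G=N\rtimes A$ — expressing $p_t$ through the heat kernels on the stratified factor $N$ and on $A$ and transferring the known Gaussian and gradient bounds on $N$; using the commutation of horizontal left-invariant vector fields with the heat semigroup so that $Yp_t$ may be rewritten via $\mathrm{Ad}$-transported fields acting on $p_{t/2}\ast p_{t/2}$; and controlling the remaining terms through finite propagation speed for the wave equation of $\Delta$ and the spectral gap. This is the technical heart of the argument; granting it, the previous steps complete the proof.
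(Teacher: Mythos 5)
Your overall architecture (spectral-theoretic $L^2$-boundedness, subordination to the heat semigroup, local/global splitting in time, Hebisch--Steger Calder\'on--Zygmund theory at infinity, and reduction to large-time derivative bounds for $h_t$) is broadly the right one, but the analytic input you rely on is false, and this is not a repairable detail: it is the crux. The group $G=N\rtimes A$ is solvable, hence amenable, so the sub-Laplacian $\Delta$, self-adjoint on $L^2$ of the \emph{right} Haar measure, has $\inf\mathrm{spec}(\Delta)=0$: there is no spectral gap, $\Delta^{-1/2}$ and $\Delta^{-1}$ are unbounded densely defined operators (the paper's Proposition \ref{prp:def_riesz} is devoted precisely to making sense of the compositions $Y\Delta^{-1/2}$ and $Y\Delta^{-1}Z$ for this reason), and there is no exponential factor $e^{-bt}$ in any large-time bound. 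The true large-time estimates are only polynomial in $t$: $\|e^{\epsilon|\cdot|_\dist/t^{1/2}}Yh_t\|_1\lesssim t^{-1/2}$ and, for $Y,Z$ in the span of $\vfG_1,\dots,\vfG_\fdim$, $\|Y(Zh_t)^*\|_1\lesssim t^{-3/2}$ (Propositions \ref{prp:gradientestimates} and \ref{prp:gradientestimates_mixed}). With only these, your global part of the first-order kernel is not absolutely convergent in any useful norm ($\int_1^\infty t^{-1/2}\,t^{-1/2}\,\ndi t=\infty$), and a single uniform integrated H\"ormander condition over all Calder\'on--Zygmund sets cannot be extracted: one must keep the dyadic-in-time decomposition $T_n=\int_{2^n}^{2^{n+1}}t^{-1/2}Ye^{-t\Delta}\,\ndi t$ for \emph{all} $n\in\ZZ$ and verify the Hebisch--Steger size and smoothness conditions at the matched scale $c^n=2^{-n/2}$ for each piece, which is exactly where the weighted $L^1$ bounds with weight $e^{\epsilon|\cdot|_\dist/t^{1/2}}$ enter. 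So the "key new ingredient" you grant yourself is unavailable, and the step it was meant to justify fails as stated.

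For the second-order transform your plan is closer to the actual proof (the part at infinity $\int_1^\infty Y(Zh_t)^*\,\ndi t$ really is shown to have an $L^1$ kernel, which is stronger than a H\"ormander condition), but again the mechanism is not exponential decay. When $Y,Z$ are both in the first layer the extra polynomial decay $t^{-3/2}$ suffices; when one of them is a multiple of $\vfG_0$ no such termwise decay is available, and the paper must exploit a cancellation that only appears after integrating in $t$ first, via the explicit formula for $\int_1^\infty\partial_\xi[\xi\Psi_t(\xi)]\,\ndi t$ (Lemma \ref{lem:cancellation}) in the Mustapha--Gnewuch representation \eqref{eq:integral} of $h_t$ in terms of $h_t^N$. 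Your proposal does not anticipate this case distinction and would not close without it. Two smaller points: the adjoint $X^*$ of a left-invariant field with respect to the right Haar measure is exactly $-X$ (no constant), and the local part cannot simply be delegated to Coifman--Weiss theory, since $(G,\dist,\mu)$ is not doubling; the paper runs the local dyadic pieces through the same Hebisch--Steger criterion using the small-time Gaussian bounds.
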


We refer to Section \ref{ss:riesz_def} below for a precise definition of the Riesz transforms $Y \Delta^{-1/2}$ and $Y \Delta^{-1} Z$. The spaces $H^1(G)$ and $BMO(G)$ in the above statement are the Hardy and bounded mean oscillation spaces associated with the metric-measure space $(G,\dist,\mu)$ and its \CZ\ structure. It should be noted that the standard \CZ\ theory for doubling metric-measure spaces \cite{CW} does not apply to $(G,\dist,\mu)$, which has exponential volume growth. Instead we exploit the non-doubling \CZ\ theory of Hebisch and Steger \cite{HS} and the corresponding Hardy and BMO theory developed in \cite{MOV, V1,V3}.

Boundedness of Riesz transforms associated with Laplacians and sub-Laplacians has been studied in a variety of settings. Here we recall those results which have a direct connection with ours and refer to the cited works and references therein for a broader discussion. We remark that we are interested in $L^p$-boundedness with respect to a right Haar measure; in the case of a left Haar measure, the Riesz transforms $Y \Delta^{-1/2}$ considered here are not $L^p$-bounded for any $p \in [1,\infty]$ \cite{LM}.

In the case $N = \RR^Q$ is abelian, the operator $\Delta$ is a full Laplacian (although it is not the Laplace--Beltrami operator on the hyperbolic space $\RR^Q \rtimes \RR$). In this case, various parts of Theorem \ref{thm:main} are contained in already known results. Weak type $(1,1)$ and $L^p$-boundedness ($1 < p \leq 2$) of the first-order Riesz transforms $Y \Delta^{-1/2}$ is in \cite[Theorem 2.4]{HS} (previous partial results are in \cite{Sj}). Weak type $(1,1)$ and $L^p$-boundedness ($1 < p < \infty$) of the second-order Riesz transforms $Y \Delta^{-1} Z$ is in \cite{GQS,GS}. $H^1 \to L^1$ boundedness of first- and second-order Riesz transforms is studied in \cite{SV} in the particular case $Q=2$. The main novelty of our result lies in the fact that we can consider nonabelian $N$ and, correspondingly, nonelliptic $\Delta$. 

Other Riesz transforms on solvable extensions of stratified groups were previously studied in the literature for distinguished full Laplacians, especially in the context of 
 Iwasawa $NA$ groups of rank $1$ \cite{GQS, GS, W}. All these results involving a full Laplacian make strong use of spherical analysis on semisimple Lie groups (see also \cite{CGHM,HS,V1} for the study of spectral multipliers of a full Laplacian in such a context). This tool is not available for the analysis of the sub-Laplacian $\Delta$ on $G$ (unless $N$ is abelian), hence here different techniques are needed.

Due to the noncommutativity of $G$, the Riesz transform $Y \Delta^{-1} Z$ differs from $YZ \Delta^{-1}$ and $\Delta^{-1} YZ$. Indeed $YZ \Delta^{-1}$ and $\Delta^{-1} YZ$ are not $L^p$-bounded for any $p \in [1,\infty]$, at least when $N$ is abelian \cite{GQS,GS}, and therefore the Riesz transforms $Y \Delta^{-1} Z$ are the only ones for which it makes sense to investigate $L^p$-boundedness. Hence Theorem \ref{thm:main} gives a complete picture regarding $L^p$-boundedness of second-order Riesz transforms associated with $\Delta$, as well as $L^p$-boundedness for $p \leq 2$ for the first-order Riesz transforms $Y \Delta^{-1/2}$.

Note that the adjoint of $Y \Delta^{-1} Z$ is $Z \Delta^{-1} Y$, so it is natural that the $L^p$-boundedness range for these Riesz transforms is symmetric with respect to $p=2$.
The same does not hold for first-order transforms, and $L^p$-boundedness for $p>2$ of $Y \Delta^{-1/2}$ is equivalent to $L^p$-boundedness for $p < 2$ of $(Y \Delta^{-1/2})^* = - \Delta^{-1/2} Y$. For the operators $\Delta^{-1/2} Y$ very few results appear to be available in the literature: in the case $Q = 2$, it is known that the operators $\Delta^{-1/2} Y$ are not bounded from $H^1(G)$ to $L^1(G)$ \cite{SV}; on the other hand, in the case $Q = 1$ (i.e., the ``$ax+b$ group'' case), $\Delta^{-1/2} Y$ is known to be of weak type $(1,1)$ and $L^p$-bounded for $p \in (1,\infty)$ for a particular choice of $Y$ (i.e., when $Y \in \RR \vfG_1$) \cite{GS2}. While the existing results do not exclude that $\Delta^{-1/2} Y$ may be bounded for $p<2$ in greater generality, they seem to indicate that methods different from the ones employed in the present paper 
 would be necessary for such an investigation.

Indeed the proof of our results goes through showing that the Riesz transforms under consideration are singular integral operators of \CZ\ type, which implies weak type $(1,1)$ as well as $H^1 \to L^1$ boundedness.

As it is well-known, Riesz transforms can be subordinated to the heat semigroup $e^{-t\Delta}$ and their boundedness properties can be derived from estimates of suitable derivatives of the heat kernel $h_t$. It should be noted that, while small-time estimates for heat kernels associated to sub-Laplacians are available in great generality, precise large-time estimates on exponentially growing groups are known only in particular cases. Here we obtain weighted $L^1$-estimates
\[
\| e^{\epsilon |\cdot|_\dist/t^{1/2}} h_t \|_1 \lesssim_\epsilon 1, \qquad  \| e^{\epsilon |\cdot|_\dist/t^{1/2}} Y h_t \|_1 \lesssim_\epsilon t^{-1/2}
\]
for any $\epsilon \geq 0$ and all $t\in(0,\infty)$, where $|\cdot|_\dist$ denotes the $\dist$-distance from the identity and $Y$ is any horizontal left-invariant vector field. These estimates imply, via \CZ\ theory, the boundedness of the Riesz transforms $Y \Delta^{-1/2}$ for $p\in (1,2]$. The above heat kernel estimates extend those in \cite{HS} (where only the case $N$ abelian is considered and explicit formulas for the heat kernel are exploited, which are not available in our generality) and enhance those in \cite{MOV} (where unweighted estimates were proved for general $N$). 

As for the second-order Riesz transforms $Y \Delta^{-1} Z$, the argument for $N$ abelian in \cite{GQS,GS} is based, among other things, on a precise characterisation and asymptotic analysis of a fundamental solution of $\Delta^{-1}$, which is made possible by the fact that this fundamental solution is radial (after multiplication by a suitable power of the modular function).
These properties are no longer available in the case $N$ nonabelian and a different route is followed here, based on estimates of the second-order derivatives $Y (Z h_t)^*$ of the heat kernel (here $f \mapsto f^*$ is the usual $L^1$-isometric involution): note that $-\int_0^\infty Y (Z h_t)^* \di t$ is the convolution kernel of the Riesz transform $Y \Delta^{-1} Z$. Indeed the ``local part'' $\int_0^1 Y (Z h_t)^* \di t$ of the kernel is shown to satisfy estimates of \CZ\ type as before. The ``part at infinity'' $\int_1^\infty Y (Z h_t)^* \di t$, instead, turns out to be integrable. More precisely, in the case $Y, Z \in \Span\{\vfG_1,\dots,\vfG_\fdim\}$, we can prove that 
\[
\| e^{\epsilon |\cdot|_\dist/t^{1/2}} \, Y (Z h_t)^* \|_1 \lesssim_\epsilon \min \{t^{-1}, t^{-3/2} \}
\]
and the ``extra decay'' for large $t$ yields $\int_1^\infty Y (Z h_t)^* \di t \in L^1(G)$.
In the case one of $Y$ and $Z$ (or both) is a multiple of $\vfG_0$, instead, a more careful analysis is employed, exploiting additional cancellations occurring in the integration in $t$.

Our analysis is essentially based on a formula \cite{mustapha_multiplicateurs_1998,gnewuch_differentiable_2006} expressing the heat kernel $h_t$ on $G$ in terms of the corresponding heat kernel $h_t^N$ on $N$, as well as on a formula expressing the distance $\dist$ on $G$ in terms of the sub-Riemannian distance on $N$ \cite{H,MOV}. Through a number of manipulations, estimates for derivatives of $h_t$ are then reduced to estimates for derivatives of $h_t^N$, which are well-known. In these respects, our methods appear to be more robust than those used in previous works in the case $N$ abelian: instead of symmetry and radiality properties (which are not available for general $N$), here we directly exploit the semidirect product structure of $G = N \rtimes A$ to relate analysis on $G$ to analysis on $N$.

The $L^p$-boundedness of first-order Riesz transforms given in Theorem \ref{thm:main}\ref{en:main_first} might be the starting point to discuss properties of homogeneous Sobolev spaces defined in terms of the sub-Laplacian $\Delta$, in the spirit of the work done for nonhomogenous Sobolev spaces on nonunimodular groups in \cite{PV} and for both homegeneous and nonhomogeneous Sobolev spaces on unimodular groups in \cite{CRTN}. 

Moreover, it would be interesting to investigate boundedness properties for Riesz transforms associated with the sub-Laplacian with drift $\Delta-X$, where $X$ is a suitable horizontal left-invariant vector field, for which a multiplier theorem was proved in \cite{MOV}. Some results in this direction were obtained in \cite{LM2}.

\bigskip

The structure of the paper is as follows. In Section \ref{s:prelim} we recall a number of known facts regarding the groups $G$ and $N$, the sub-Riemannian structure, and the related \CZ\ theory. 
In Section \ref{s:heat} we derive weighted $L^1$-estimates for certain derivatives of the heat kernel. Finally, in Section \ref{s:riesz} we prove our main result, Theorem \ref{thm:main}.

\bigskip

Let us fix some notation that will be used throughout.
$\Rpos$ and $\Rnon$ denote the open and closed positive half-lines in $\RR$ respectively.
The letter $C$ and variants such as $C_s$ denote finite positive constants that may vary from place to place.
Given two expressions $A$ and $B$, $A\lesssim B$ means that there exists a finite positive constant $C$ such that $ A\le C \,B $. Moreover $A \sim B$ means $A \lesssim B$ and $B \lesssim A$.

\section{Preliminaries}\label{s:prelim}

The material presented in this section summarises a number of definitions and results extensively discussed in \cite{MOV}, to which we refer for details and references to the literature.

\subsection{Stratified groups and their extensions}\label{subs:NA}

Let $N$ be a stratified group. In other words, $N$ is a simply connected Lie group, whose Lie algebra $\lie{n}$ is equipped with a derivation $D$ such that the eigenspace of $D$ corresponding to the eigenvalue $1$ generates $\lie{n}$ as a Lie algebra. The eigenvalues of $D$ are positive integers $1,\dots,\step$ and $\lie{n}$ is the direct sum of the eigenspaces of $D$, which are called layers: the $j$th layer corresponds to the eigenvalue $j$. Moreover $\lie{n}$ is $\step$-step nilpotent, where $\step$ is the maximum eigenvalue.

The exponential map $\exp_N : \lie{n} \to N$ is a diffeomorphism and provides global coordinates for $N$, that shall be used in the sequel without further mention.
 Any chosen Lebesgue measure on $\lie{n}$ is then a left and right Haar measure on $N$, which we fix throughout. The formula $\delta_t = \exp((\log t) D)$ defines a family of automorphic dilations $(\delta_t)_{t>0}$ on $N$, and $Q = \tr D$ is called the homogeneous dimension of $N$. 

Let $A = \RR$, considered as an abelian Lie group. Again we identify $A$ with its Lie algebra $\lie{a}$. Then $A$ acts on $N$ by automorphic dilations, 
and we can define the corresponding semidirect product $G = N \rtimes A$, with operations
\begin{equation}\label{eq:group_law}
(z,u) \cdot (z',u') = (z \cdot e^{uD} z', u+u'), \qquad (z,u)^{-1} = (-e^{-uD} z,-u)
\end{equation}
and identity element $0_G=(0_N,0)$. $G$ is a solvable Lie group, and the Lie algebra $\lie{g}$ of $G$ is natually identified with the semidirect product of Lie algebras $\lie{n} \rtimes \lie{a}$.

The left and right Haar measures $\mu_\ell$ and $\mu$ on $G$ are given by
\[
\di\mu_\ell(z,u) = e^{-Qu} \di z \di u  \qquad \di\mu(z,u) = \di z \di u
\]
and the modular function $m$ is given by $m(z,u) =  e^{-Qu}$. In particular $G$ is not unimodular and has exponential volume growth. In the following the right Haar measure $\mu$ will be used to define Lebesgue spaces $L^p(G) = L^p(G,\di \mu)$ on $G$ and $\|f\|_p$ will denote the $L^p(G)$-norm of a function $f$ on $G$. Recall that $L^1(G)$ is a Banach $*$-algebra with respect to convolution and involution given by
\[
f * g(x) = \int_G f(xy^{-1}) \, g(y) \di \mu(y), \qquad f^*(x) = m(x) \overline{f(x^{-1})}
\]
for all $f,g \in L^1(G)$ and $x \in G$.

\subsection{Sub-Riemannian structure}\label{subs:metric}

Consider a system $\vfN_1,\dots,\vfN_\fdim$ of left-invariant vector fields on $N$ that form a basis of the first layer of $\lie{n}$. These vector fields provide a global frame for a sub-bundle $HN$ of the tangent bundle $TN$ of $N$, called the horizontal distribution. Since $N$ is stratified, the first layer generates $\lie{n}$ as a Lie algebra and consequently the horizontal distribution is bracket-generating.

Let $g^N$ be the left-invariant sub-Riemannian metric on the horizontal distribution of $N$ which makes $\vfN_1,\dots,\vfN_\fdim$ into an orthonormal basis, and $\dist^N$ the associated \CaC\ distance on $N$.
Since the horizontal distribution is bracket-generating, the distance $\dist^N$ is finite and induces on $N$ the usual topology.
Moreover, since $\vfN_1,\dots,\vfN_\fdim$ are left-invariant and belong to the first layer, the distance $\dist^N$ is left-invariant and homogeneous with respect to the automorphic dilations $\delta_t$.

Let $\vfA = \partial_u$ be the canonical basis of $\lie{a}$. The vector fields $\vfA$ on $A$ and $\vfN_1,\dots,\vfN_\fdim$ on $N$ can be lifted to left-invariant vector fields on $G$ given by
\[
\vfG_0|_{(z,u)} = \vfA|_z = \partial_u,  \qquad \vfG_j|_{(z,u)} = e^{u} \vfN_j|_z \qquad \text{ for $j=1,\dots,\fdim.$}
\]
Analogously as above, the system $\vfG_0,\dots,\vfG_\fdim$ generates the Lie algebra $\lie{g}$ and defines a sub-Riemannian structure on $G$ with associated horizontal distribution $HG$, sub-Riemannian metric $g$ and left-invariant \CaC\ distance $\dist$. 

Let $|x|_\dist = \dist(x,0_G)$ be the distance of $x \in G$ from the identity; similarly, let $|z|_N  = \dist^N(z,0_N)$ be the distance of $z \in N$ from the identity. The following relation between the \CaC\ distances on $G$ and $N$ is proved in \cite[Proposition 2.7]{MOV} (see also \cite{H}).

\begin{prp}\label{prp:distance}
For all $(z,u) \in G$,
\begin{equation}\label{eq:distanza}
|(z,u)|_\dist = \arccosh \left(\cosh u + e^{-u} |z|_N^2/2 \right).
\end{equation}
\end{prp}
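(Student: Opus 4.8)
The plan is to exploit the warped-product structure of the sub-Riemannian metric $g$ on $G = N \rtimes A$ --- namely the fact that, in the coordinates $(z,u)$, $g$ is formally $\ndi u^2 + e^{-2u} g^N$ --- to reduce the computation to the classical distance formula on the hyperbolic plane. First I would record the length element: a curve $\gamma(t) = (z(t),u(t))$ in $G$ is $g$-horizontal precisely when $z(\cdot)$ is a horizontal curve in $N$, since $H_{(z,u)}G = \RR\,\partial_u \oplus H_z N$ inside $T_{(z,u)}G = T_z N \oplus \RR$; and, expanding $\dot\gamma(t)$ in the frame $\vfG_0,\dots,\vfG_\fdim$ and using that $\vfN_1,\dots,\vfN_\fdim$ are $g^N$-orthonormal, one gets $|\dot\gamma(t)|_g^2 = \dot u(t)^2 + e^{-2u(t)}\,|\dot z(t)|_{g^N}^2$, so that the $g$-length of $\gamma$ equals $\int_0^1 \bigl(\dot u^2 + e^{-2u}|\dot z|_{g^N}^2\bigr)^{1/2} \di t$.

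Next I would reduce to a two-dimensional problem. Let $\mathcal H = \{(r,u) : r \ge 0\}$ carry the Riemannian metric $\ndi u^2 + e^{-2u}\,\ndi r^2$, with distance $d$, and write $R = |z|_N$ for the target point $(z,u)$. For the lower bound: since $\dist^N$ is $1$-Lipschitz along curves, the function $t \mapsto |z(t)|_N$ has derivative bounded a.e.\ by $|\dot z(t)|_{g^N}$, so $t \mapsto (|z(t)|_N, u(t))$ is a curve in $\mathcal H$ from $(0,0)$ to $(R,u)$ of length at most that of $\gamma$; taking the infimum over horizontal $\gamma$ yields $|(z,u)|_\dist \ge d((0,0),(R,u))$. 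For the reverse inequality: choose a unit-speed minimizing $\dist^N$-geodesic $\sigma \colon [0,R] \to N$ from $0_N$ to $z$ (which exists, $N$ being complete) and a minimizing $\mathcal H$-geodesic $t \mapsto (r(t),u(t))$, $t \in [0,1]$, from $(0,0)$ to $(R,u)$. Under the isometry $(r,u) \mapsto (r,e^u)$ of $\mathcal H$ onto a quarter of the hyperbolic upper half-plane --- whose geodesics are vertical half-lines and semicircles centred on the boundary --- the $r$-coordinate along this geodesic is monotone, so it stays in $\{0 \le r \le R\}$; in particular $r$ may be taken nondecreasing, and $d$ coincides there with the unconstrained hyperbolic distance. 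Then $\gamma(t) := (\sigma(r(t)), u(t))$ is a horizontal curve in $G$ from $0_G$ to $(z,u)$ with $|\dot z(t)|_{g^N} = \dot r(t)$, so its $g$-length equals that of the chosen $\mathcal H$-geodesic, giving $|(z,u)|_\dist \le d((0,0),(R,u))$. Hence $|(z,u)|_\dist = d((0,0),(R,u))$.

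Finally, applying the classical hyperbolic distance formula in the upper half-plane model (coordinate $y = e^u$) to the points $(0,1)$ and $(R,e^u)$ gives
\[
\cosh\bigl(d((0,0),(R,u))\bigr) = 1 + \frac{R^2 + (e^u - 1)^2}{2e^u} = \cosh u + \frac{e^{-u} R^2}{2},
\]
and substituting $R = |z|_N$ produces \eqref{eq:distanza}. The main obstacle is the reduction step: the lower bound rests on the $1$-Lipschitz property of $\dist^N$ along curves, while the upper bound requires manufacturing a genuinely $g$-horizontal curve on $G$ out of a planar geodesic --- and for the latter it is essential to verify that the relevant two-dimensional geodesic does not leave the region $\{r \ge 0\}$, so that the constrained distance $d$ agrees with the unconstrained hyperbolic distance there. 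The remaining steps are routine verifications.
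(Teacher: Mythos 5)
Your argument is correct. Note that the paper itself gives no proof of this proposition: it is quoted verbatim from \cite[Proposition 2.7]{MOV}, and the argument there is essentially the one you give, namely reducing to the hyperbolic half-plane via the projection $(z,u)\mapsto(|z|_N,u)$, with the lower bound coming from the $1$-Lipschitz property of $|\cdot|_N$ along horizontal curves and the upper bound from lifting a hyperbolic geodesic through a minimizing $\dist^N$-geodesic in $N$ (whose existence is guaranteed since $(N,\dist^N)$ is a boundedly compact length space). You correctly identify and handle the one point that genuinely needs checking, the monotonicity of the $r$-coordinate along the planar geodesic, so the proposal stands as a complete proof.
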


For a (differentiable) function $f$ on $G$ we define the horizontal gradient $\nabla_H f(x) \in H_x G$ at $x \in G$ by
\[
g_x(\nabla_H f(x),v) = (\ndi f)_x(v)  \qquad\forall v \in H_x G,
\]
where $(\ndi f)_x$ is the differential of $f$ at $x$. 
 It is easily seen that
\begin{equation}\label{eq:nablaH}
|\nabla_H f(x)|^2_g  = g_x(\nabla_H f(x),\nabla_H f(x)) = \sum_{j=0}^\fdim |\vfG_j f(x)|^2.
\end{equation}
The close relation between the horizontal gradient $\nabla_H$ and the sub-Riemannian distance $\dist$ is clearly expressed by the following well-known estimate (see, e.g., \cite[VIII.1.1]{varopoulos_analysis_1992} and \cite[Lemma 5.4]{MOV}). Here we denote by $R_y$ the right translation operator defined by
\[
R_y f(x)=f(xy)
\]
for all $f : G \to \CC$ and $x,y\in G$.

\begin{lem}\label{lem:srgradient}
For all $f \in L^1_\loc(G)$ such that $\left|\nabla_{H} f\right|_g \in L^1(G)$, and for all $y,z \in G$,
\[
\|R_y f - R_z f\|_1 \leq \dist(y,z) \left\| \left|\nabla_{H} f\right|_g  \right\|_1.
\]
\end{lem}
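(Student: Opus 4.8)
The plan is to derive the inequality from the defining property of the \CaC\ distance $\dist$ as the infimum of lengths of horizontal curves, combined with two complementary invariances: the frame $\vfG_0,\dots,\vfG_\fdim$ is left-invariant, whereas the Haar measure $\mu$ is right-invariant. In essence this is the classical estimate relating increments of a Sobolev function to its gradient, transported to the sub-Riemannian setting. Since the horizontal distribution is not involutive, one cannot mollify ``in the horizontal directions only'', so the only step requiring genuine care — and the step I expect to be the main obstacle — is the reduction to smooth $f$, which I would carry out by a group convolution exploiting the right-invariance of $\mu$.

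First, if $\||\nabla_H f|_g\|_1 = \infty$ there is nothing to prove, so I may assume it is finite; in particular each $\vfG_j f$ lies in $L^1(G)$. Fix a smooth approximate identity $(\phi_n)_{n}$ on $G$ — nonnegative functions in $C_c^\infty(G)$ with $\int_G \phi_n \di\mu = 1$ and $\supp\phi_n \to \{0_G\}$ — and set $f_n = \phi_n * f$. Then $f_n \in C^\infty(G)$ and $f_n \to f$ in $L^1_\loc(G)$. Since each $\vfG_j$ is left-invariant and $\mu$ is right-invariant, the substitution $y \mapsto y\exp(t\vfG_j)$ in the convolution integral gives $\vfG_j f_n = \phi_n * (\vfG_j f)$; hence $|\nabla_H f_n|_g \le \phi_n * |\nabla_H f|_g$ pointwise, by Minkowski's integral inequality, and $\||\nabla_H f_n|_g\|_1 \le \||\nabla_H f|_g\|_1$ by Young's inequality for convolution on $(G,\mu)$. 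Because $R_y$ and $R_z$ are continuous on $L^1_\loc(G)$, a subsequence of $(R_y f_n - R_z f_n)_n$ converges $\mu$-a.e. to $R_y f - R_z f$, and Fatou's lemma then reduces the claim to the case of an $f \in C^\infty(G)$ with $|\nabla_H f|_g \in L^1(G)$.

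For such an $f$, fix $\epsilon > 0$ and pick a Lipschitz horizontal curve $\gamma : [0,1] \to G$ with $\gamma(0) = y$, $\gamma(1) = z$ and $\int_0^1 |\dot\gamma(s)|_g \di s \le \dist(y,z) + \epsilon$, which exists by the very definition of $\dist$. For a.e.\ $s$ write $\dot\gamma(s) = \sum_{j=0}^\fdim a_j(s)\, \vfG_j|_{\gamma(s)}$, so that $\sum_{j=0}^\fdim a_j(s)^2 = |\dot\gamma(s)|_g^2$. For fixed $x \in G$, left-invariance of the $\vfG_j$ gives $(\ndi L_x)_{\gamma(s)}\, \vfG_j|_{\gamma(s)} = \vfG_j|_{x\gamma(s)}$, so $s \mapsto f(x\gamma(s))$ is Lipschitz with derivative $\sum_{j=0}^\fdim a_j(s)\, (\vfG_j f)(x\gamma(s))$ for a.e.\ $s$; by the Cauchy--Schwarz inequality and \eqref{eq:nablaH},
\[
|f(xz) - f(xy)| \le \int_0^1 |\dot\gamma(s)|_g\, |\nabla_H f|_g(x\gamma(s)) \di s .
\]
Integrating in $x$ against $\mu$, using Tonelli's theorem and then the right-invariance of $\mu$ to rewrite $\int_G |\nabla_H f|_g(x\gamma(s)) \di\mu(x) = \||\nabla_H f|_g\|_1$ for each $s$, I obtain
\[
\|R_z f - R_y f\|_1 \le \||\nabla_H f|_g\|_1 \int_0^1 |\dot\gamma(s)|_g \di s \le (\dist(y,z) + \epsilon)\, \||\nabla_H f|_g\|_1 ,
\]
and letting $\epsilon \to 0$ completes the proof.
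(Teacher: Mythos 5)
Your proof is correct, and it is essentially the standard argument that the paper itself relies on by citation (it refers to \cite[VIII.1.1]{varopoulos_analysis_1992} and \cite[Lemma 5.4]{MOV} rather than proving the lemma): mollify via group convolution using the right-invariance of $\mu$ and the identity $\vfG_j(\phi_n * f) = \phi_n * \vfG_j f$, then integrate along a near-minimizing horizontal curve and use Cauchy--Schwarz together with right-invariance. No gaps.
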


\subsection{\CZ\ theory and Hardy spaces}\label{subs:abstractCZ}

A detailed description of the \CZ\ and Hardy space theory on $(G,\dist,\mu)$ can be found in \cite[Section 3]{MOV}, to which we refer also for the definition of the Hardy and bounded mean oscillation spaces $H^1(G)$ and $BMO(G)$. For the reader's convenience, here we record a criterion for boundedness of singular integral operators, which is a rephrasing of \cite[Theorem 1.2]{HS} and \cite[Theorems 3.2 and 3.8]{MOV} in the particular case of left-invariant operators on $L^2(G)$. Note that any such operator $T$ is a convolution operator:
\[
T \phi = \phi * k
\]
for some convolution kernel $k$ (which in general is a distribution on $G$) and all $\phi \in C_c^\infty(G)$; if $k$ is a locally integrable function, then
\[
T \phi(x) = \int K(x,y) \, \phi(y) \di\mu(y)
\]
for almost all $x \in G$, where the integral kernel $K$ of $T$ is given by
\[
K(x,y) =k(y^{-1}x)\,m(y) \qquad \text{for a.a. } x,y\in G.
\]

\begin{thm}\label{thm:Teolim}
Let $T$ be a linear operator bounded on $L^2(G)$ such that $T=\sum_{n\in \ZZ}T_n$, where
\begin{enumerate}[label=(\roman*)]
\item the series converges in the strong topology of operators on $L^2(G)$;
\item every $T_n$ is a left-invariant operator with convolution kernel $k_n \in L^1(G)$;
\item there exist positive constants $b,B,\varepsilon,c$ with $c \neq 1$ such that, for all $n \in \ZZ$,
\begin{gather}
\label{eq:czwl1} \int_G|k_n(x)|\,\big(1+c^n |x|_\dist \big)^{\varepsilon} \di\mu (x) \leq B,\\
\label{eq:czhoelder} \int_G|k_n^*(xy) - k_n^*(x)| \di\mu (x) \leq B\,\big(c^n |y|_\dist \big)^b\qquad\forall y\in G.
\end{gather}
\end{enumerate}
Then $T$ is of weak type $(1,1)$, bounded on $L^p(G)$ for $p \in (1,2]$, and bounded from $H^1(G)$ to $L^1(G)$.
\end{thm}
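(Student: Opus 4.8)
The plan is to deduce Theorem \ref{thm:Teolim} from the non-doubling Calder\'on--Zygmund theory of Hebisch--Steger together with its Hardy-space counterpart: by \cite[Theorem 1.2]{HS} and \cite[Theorems 3.2 and 3.8]{MOV}, once $T$ is known to be bounded on $L^2(G)$ it suffices to verify that the convolution kernel $k$ of $T$ satisfies the integrated H\"ormander condition attached to the \CZ\ structure of $(G,\dist,\mu)$; weak type $(1,1)$, $H^1\to L^1$ boundedness, and (by real interpolation with the $L^2$ bound) $L^p$ boundedness for $p\in(1,2]$ then follow at once. So, writing $k=\sum_n k_n$, I would first use hypotheses (i)--(ii) to identify the integral kernel of $T$ as $K(x,y)=k(y^{-1}x)\,m(y)=\sum_n K_n(x,y)$ with $K_n(x,y)=k_n(y^{-1}x)\,m(y)$, and observe that the relevant H\"ormander integral for $K$ is therefore dominated by the sum over $n$ of those for the $K_n$; the remaining task is to show this sum is finite, uniformly, and this is where (iii) enters.

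Fix a set $R$ of the \CZ\ family of scale $r$, points $y,y'\in R$, and the fixed enlargement $R^*$; by the geometric properties of these sets, $\dist(y,y')\lesssim r$ and $\dist(x,y)\gtrsim r$ whenever $x\notin R^*$. Choose $n_0\in\ZZ$ with $c^{-n_0}\sim r$ and split $\sum_n=\sum_{n\ge n_0}+\sum_{n<n_0}$. For the fine scales $n\ge n_0$, I would estimate $|K_n(x,\cdot)|$ termwise: the substitution $w=y^{-1}x$, under which the modular factor $m(y)$ is cancelled exactly by the Jacobian of $x=yw$, gives
\[
\int_{x\notin R^*}|K_n(x,y)|\di\mu(x)=\int_{\{w : yw\notin R^*\}}|k_n(w)|\di\mu(w)\le\int_{|w|_\dist\gtrsim r}|k_n(w)|\di\mu(w)\le B\,(c^n r)^{-\varepsilon}
\]
by \eqref{eq:czwl1}, and likewise with $y'$ in place of $y$; since $c^n r\gtrsim1$ for $n\ge n_0$, summing the geometric series produces a contribution $\lesssim B$.

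For the coarse scales $n<n_0$, bounding $|K_n(x,y)|$ and $|K_n(x,y')|$ separately is too wasteful, and one must exploit the regularity condition \eqref{eq:czhoelder}. The key identity, obtained from the change of variables $x=yv$ followed by $v\mapsto v^{-1}$ together with the defining relation $k_n(x)=m(x)\,\overline{k_n^*(x^{-1})}$ for the involution and the multiplicativity of $m$, is
\[
\int_G|K_n(x,y)-K_n(x,y')|\di\mu(x)=\int_G|k_n^*(s\,y^{-1}y')-k_n^*(s)|\di\mu(s)\le B\,\bigl(c^n\,\dist(y,y')\bigr)^b\lesssim B\,(c^n r)^b,
\]
where we applied \eqref{eq:czhoelder} with $y^{-1}y'$ in the role of $y$, noting $|y^{-1}y'|_\dist=\dist(y,y')\lesssim r$. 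Since $c^n r\lesssim1$ for $n<n_0$, the sum of these over $n<n_0$ is again $\lesssim B$, and adding the two ranges gives the required H\"ormander bound, uniform in $R,y,y'$. I expect the main obstacle to be precisely the bookkeeping of the modular function in this last identity — the step genuinely special to the non-unimodular, right-measure setting and absent from the classical CZ criterion — together with the routine but not entirely cosmetic justification that the H\"ormander integral of $K=\sum_n K_n$ may be handled term by term; everything else is summation of geometric series and an appeal to the cited abstract theorems.
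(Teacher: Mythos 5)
Your proposal is correct and follows exactly the route the paper intends: the paper gives no proof of this theorem at all, presenting it explicitly as ``a rephrasing of \cite[Theorem 1.2]{HS} and \cite[Theorems 3.2 and 3.8]{MOV} in the particular case of left-invariant operators,'' so the only content to supply is the translation from conditions on the convolution kernels $k_n$ to the hypotheses of those abstract results, which you carry out correctly. In particular your bookkeeping of the modular function is right on both counts: the substitution $x=yw$ turns $\int|K_n(x,y)|(1+c^n\dist(x,y))^\varepsilon\di\mu(x)$ into $\int|k_n(w)|(1+c^n|w|_\dist)^\varepsilon\di\mu(w)$ with the factor $m(y)$ cancelling exactly, and the identity $K_n(x,y)=m(x)\,\overline{k_n^*(x^{-1}y)}$ together with $\int f(x^{-1})\di\mu(x)=\int f(x)\,m(x)\di\mu(x)$ and right-invariance converts the integrated H\"ormander difference into $\int|k_n^*(s)-k_n^*(s\,y^{-1}y')|\di\mu(s)$, to which \eqref{eq:czhoelder} applies since $|y^{-1}y'|_\dist=\dist(y,y')$ by left-invariance of $\dist$. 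The dyadic splitting at $c^{-n_0}\sim r$ is in fact the engine of Hebisch--Steger's own proof (their Theorem 1.2 is already stated for a decomposed operator $T=\sum_j T_j$ with precisely these two conditions on the integral kernels), so that part of your argument, while sound, re-proves something you are entitled to quote; the one point you rightly flag as needing care --- identifying $K$ with $\sum_n K_n$ off the diagonal so that the H\"ormander integral can be handled termwise --- is likewise absorbed into the cited theorem's hypotheses.
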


\begin{rem}\label{rem:Teolim_grad}
In view of Lemma \ref{lem:srgradient}, the condition \eqref{eq:czhoelder} with $b=1$ can be replaced by the stronger condition
\[
\int_G | \nabla_H k_n^*(x) |_g \di \mu(x) \leq B \,c^n.
\]
\end{rem}

\section{Heat kernel estimates for the \texorpdfstring{sub-Laplacian $\Delta$}{sub-Laplacian}}\label{s:heat}

\subsection{The sub-Laplacian and its heat kernel}
Let $\Delta$ be the sub-Laplacian defined in \eqref{eq:sub-Laplacian}. We now briefly recall some well-known properties of $\Delta$ and the associated heat kernel (see, e.g., \cite{varopoulos_analysis_1992} and \cite[Section 4.1]{MOV} for further details).

Since the horizontal distribution on $G$ is bracket-generating, $\Delta$ is hypoelliptic. Moreover $\Delta$ is essentially self-adjoint and positive with respect to the right Haar measure; in fact, for all $f,g \in C^\infty_c(G)$,
\begin{equation}\label{eq:dirichlet}
\langle \Delta f, g \rangle = \sum_{j=0}^\fdim \langle \vfG_j f , \vfG_j g \rangle,
\end{equation}
where $\langle \cdot,\cdot \rangle$ denotes the inner product of $L^2(G)$. In particular $\Delta$ extends uniquely to a positive self-adjoint operator on $L^2(G)$.

The heat kernel $t \mapsto h_t$ is a semigroup of probability measures on $G$. By hypoellipticity of $\partial_t + \Delta$, the distribution $(t,x) \mapsto h_t(x)$ is in fact a smooth function on $\Rpos \times G$ and satisfies
\[
h_t * h_{t'} = h_{t+t'}, \qquad h_t \geq 0, \qquad \| h_t \|_1 = 1,  \qquad h_t^* = h_t.
\]

It is also possible to obtain small-time ``Gaussian-type'' estimates for $h_t$ and its group-invariant derivatives. Let $\Sigma$ and $\Sigma_0$ denote the sets of finite sequences of elements of $\{1,\dots,\fdim\}$ and $\{0,\dots,\fdim\}$ respectively (note that $\Sigma \subseteq \Sigma_0$). For $\alpha = (j_1,\dots,j_k) \in \Sigma_0$ we write $|\alpha| = k$ and $\vfG^\alpha = \vfG_{j_1} \cdots \vfG_{j_k}$. Similarly one defines $\vfN^\alpha$ when $\alpha \in \Sigma$.

\begin{prp}\label{prp:small_time_gaussian_heat}
For all $x \in G$, $\alpha \in \Sigma_0$, and $t \in \Rpos$,
\[
|\vfG^\alpha h_t(x)| \leq C_\alpha \, t^{-(Q+1+|\alpha|)/2} e^{\omega_\alpha t} \exp(-b_\alpha |x|_\dist^2/t),
\]
where
$C_\alpha,b_\alpha,\omega_\alpha \in \Rpos$.
In particular, for all $t_0 \in \Rpos$, $\epsilon \in \Rnon$ and $\alpha,\beta \in \Sigma_0$,
\[
\| e^{\epsilon |\cdot|_\dist/t^{1/2}} \vfG^\alpha (\vfG^\beta h_t)^* \|_1 \leq C_{\alpha,\beta,t_0,\epsilon} \, t^{-(|\alpha|+|\beta|)/2}
\] 
for all $t \in (0,t_0]$.
\end{prp}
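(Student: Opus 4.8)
The first assertion is an instance of well-known heat kernel bounds for H\"ormander sub-Laplacians, so here I would mostly appeal to the literature. Since the sub-Riemannian structure on $G$ is left-invariant and the flag generated by $\vfG_0,\dots,\vfG_\fdim$ is graded with first layer $\lie{a}\oplus\lie{n}_1$ and higher layers $\lie{n}_2,\lie{n}_3,\dots$, the space $(G,\dist)$ has local homogeneous dimension $1+\tr D=Q+1$; for $t$ in a bounded interval the stated bounds on $\vfG^\alpha h_t$ are then the classical small-time Gaussian estimates for heat kernels of sub-Laplacians (see, e.g., \cite{varopoulos_analysis_1992}), and for arbitrary $t$ one combines these with the finite propagation speed of the wave propagator $\cos(s\sqrt\Delta)$ with respect to $\dist$ (Davies--Gaffney-type off-diagonal estimates) together with on-diagonal bounds, the factor $e^{\omega_\alpha t}$ absorbing any large-time growth. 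The effort goes into the second, weighted $L^1$, assertion, which is what I sketch below.

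Write $Pf=f^*$ for the $L^1$-isometric involution, and $\vfG_j^R$ for the right-invariant vector field agreeing with $\vfG_j$ at $0_G$. A short computation, using that the modular function $m$ is a homomorphism, gives the operator identity $P\vfG_j P=\chi(\vfG_j)-\vfG_j^R$, where $\chi=\ndi\log m$, so that $\chi(\vfG_0)=-Q$ and $\chi(\vfG_j)=0$ for $j=1,\dots,\fdim$. Since $P^2=\mathrm{id}$ and $h_t^*=h_t$, iterating this identity shows that
\[
\bigl(\vfG^\alpha(\vfG^\beta h_t)^*\bigr)^* = (P\vfG^\alpha P)(\vfG^\beta h_t)
\]
is a finite linear combination of mixed derivatives $\vfG^{R,\gamma}\vfG^\beta h_t$ with $|\gamma|\le|\alpha|$, where $\vfG^{R,\gamma}$ is the corresponding monomial in the $\vfG_j^R$. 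On the other hand $|\cdot|_\dist$ is inversion-invariant (by left-invariance of $\dist$), hence $\|e^{\epsilon|\cdot|_\dist/t^{1/2}}f\|_1=\|e^{\epsilon|\cdot|_\dist/t^{1/2}}f^*\|_1$ for all $f$. Combining these, the second assertion reduces to showing
\[
\|e^{\epsilon|\cdot|_\dist/t^{1/2}}\,\vfG^{R,\gamma}\vfG^\beta h_t\|_1 \leq C_{\gamma,\beta,t_0,\epsilon}\,t^{-(|\gamma|+|\beta|)/2}\qquad\text{for }t\in(0,t_0].
\]

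To prove this I would split $h_t=h_{t/2}*h_{t/2}$. Left-invariant fields act by right convolution and right-invariant ones by left convolution, so $\vfG^{R,\gamma}\vfG^\beta h_t=(\vfG^{R,\gamma}h_{t/2})*(\vfG^\beta h_{t/2})$. The right factor is controlled pointwise by the first assertion. For the left factor, the identity above writes $\vfG^{R,\gamma}h_{t/2}$ as a combination of terms $(\vfG^\delta h_{t/2})^*$ with $|\delta|\le|\gamma|$, so---$h$ being real and $|x^{-1}|_\dist=|x|_\dist$---the first assertion yields $|\vfG^{R,\gamma}h_{t/2}(x)|\lesssim_{t_0}m(x)\,t^{-(Q+1+|\gamma|)/2}\exp(-b|x|_\dist^2/t)$ for $t\in(0,t_0]$. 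In the convolution integral, $m(xy^{-1})=m(x)m(y)^{-1}$ with $m(y)^{-1}=e^{Qu_y}\le e^{Q|y|_\dist}$ by \eqref{eq:distanza}, which is absorbed into $\exp(-b|y|_\dist^2/t)$ up to a $t_0$-dependent constant (completing the square, $t\le t_0$); sub-additivity $|x|_\dist\le|xy^{-1}|_\dist+|y|_\dist$ gives $|xy^{-1}|_\dist^2+|y|_\dist^2\gtrsim|x|_\dist^2$; and $\int_G\exp(-c|y|_\dist^2/t)\di\mu(y)\sim t^{(Q+1)/2}$ for $t\le t_0$ by \eqref{eq:radialdensity} (using $\sinh^Q r\lesssim r^Q e^{Qr}$). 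Tallying powers gives $|\vfG^{R,\gamma}\vfG^\beta h_t(x)|\lesssim_{t_0}m(x)\,t^{-(Q+1)/2-(|\gamma|+|\beta|)/2}\exp(-b'|x|_\dist^2/t)$. Taking the weighted $L^1$-norm, bounding $m(x)\le e^{Q|x|_\dist}$ once more, the weight $e^{(\epsilon/t^{1/2}+Q)|x|_\dist}$ is again absorbed into the Gaussian, and \eqref{eq:radialdensity} produces a factor $\int_0^\infty\exp(-cr^2/t)r^Q\di r\sim t^{(Q+1)/2}$, which cancels one power $t^{-(Q+1)/2}$ and leaves $t^{-(|\gamma|+|\beta|)/2}$, as needed.

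The crux is the non-unimodularity of $G$: the involution $f\mapsto f^*$ converts left-invariant into right-invariant derivatives and brings in the unbounded modular weight $m=e^{-Qu}$, so $(\vfG^\delta h_t)^*$ does \emph{not} obey a Gaussian pointwise bound. What makes the estimates close is precisely that $m(x)\le e^{Q|x|_\dist}$ by \eqref{eq:distanza}, and for $t\le t_0$ this exponential is dominated by a fraction of the Gaussian exponent $|x|_\dist^2/t$ after completing the square---which is exactly why the weighted $L^1$ bound is only asserted for $t$ in a bounded range. Everything else is bookkeeping: tracking which convolution factor each field acts on, and the powers of $t$.
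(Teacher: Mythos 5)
Your argument is correct, but for the weighted $L^1$ estimate it takes a genuinely different route from the paper. The paper first disposes of the case $\beta=0$ by directly integrating the pointwise Gaussian bound against the density \eqref{eq:radialdensity} (the factor $\sinh^Q r$ being harmless for $t\le t_0$), and then reduces the general case to it via the single convolution identity $\vfG^\alpha(\vfG^\beta h_t)^*=(\vfG^\beta h_{t/2})^* * \vfG^\alpha h_{t/2}$, combined with two soft facts: the weight $e^{\epsilon|\cdot|_\dist/t^{1/2}}$ is submultiplicative, so $\|w(f*g)\|_1\le\|wf\|_1\|wg\|_1$, and it is symmetric under inversion, so $f\mapsto f^*$ is an isometry of the weighted $L^1$ space and $\|w(\vfG^\beta h_{t/2})^*\|_1=\|w\,\vfG^\beta h_{t/2}\|_1$. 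This sidesteps any pointwise information about starred kernels, so the modular function never appears. You instead unwind the involution through the identity $P\vfG_jP=\chi(\vfG_j)-\vfG_j^R$, rewrite everything in terms of mixed right/left-invariant derivatives of $h_t$, and run an explicit Gaussian convolution estimate in which $m(x)\le e^{Q|x|_\dist}$ must be absorbed into the Gaussian by completing the square. All the steps check out (the operator identity, the placement of right-invariant derivatives on the left convolution factor, the absorption of $e^{Q|y|_\dist}$ and of the weight for $t\le t_0$, and the volume count $\int_G e^{-c|y|_\dist^2/t}\di\mu(y)\sim t^{(Q+1)/2}$), and your method yields slightly more — a pointwise bound of the form $m(x)\,t^{-(Q+1+|\gamma|+|\beta|)/2}\exp(-b'|x|_\dist^2/t)$ for the mixed derivatives, and an explicit view of where non-unimodularity enters — but at the price of considerably more bookkeeping than the paper's two-line reduction. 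For the first assertion both you and the paper defer to the literature; your finite-propagation-speed remark is unnecessary, since the cited references already give the bound for all $t>0$ with the $e^{\omega_\alpha t}$ factor.
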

\begin{proof}
The pointwise estimate is a particular instance of \cite[Theorem 2.3(e)]{martini_spectral_2011}, which in turn is a rephrasing of results in \cite{ter_elst_weighted_1998}. Integrating this estimate against the weight $e^{\epsilon |\cdot|_\dist/t^{1/2}}$ on $G$ readily yields the $L^1$ estimate in the case $|\beta| = 0$ (see also the proof of \cite[Theorem 2.3(f)]{martini_spectral_2011}). Finally, the general case follows by observing that
\[
\vfG^\alpha (\vfG^\beta h_t)^* = \vfG^\alpha (\vfG^\beta (h_{t/2} * h_{t/2}))^* = (\vfG^\beta h_{t/2})^* * (\vfG^\alpha h_{t/2});
\]
hence, by Young's inequality,
\[
\|e^{\epsilon |\cdot|_\dist/t^{1/2}} \vfG^\alpha (\vfG^\beta h_t)^* \|_1 \leq \| e^{\epsilon |\cdot|_\dist/t^{1/2}} \vfG^\beta h_{t/2} \|_1 \, \| e^{\epsilon |\cdot|_\dist/t^{1/2}} \vfG^\alpha h_{t/2}\|_1
\]
and one can apply the particular case of the  	estimate to each factor.
\end{proof}

We remark that analogous estimates hold for general connected Lie groups and sub-Laplacians thereon,
but are effective for small times only; large-time estimates, instead, are a much more delicate problem in this generality.

One case where large-time estimates are not problematic is that of the heat kernel $h_t^N$ associated to the sub-Laplacian $\Delta^N=-\sum_{j=1}^\fdim \vfN_j^2$ on $N$. Here homogeneity considerations (cf.\ \cite[formula (1.73)]{folland_hardy_1982}) readily imply that
\begin{equation}\label{eq:n_heat_homogeneity}
\vfN^\alpha (\vfN^\beta h_{\lambda^{-2} t}^N)^*(z) = \lambda^{Q+|\alpha|+|\beta|} \vfN^\alpha (\vfN^\beta h_t^N)^*(\delta_{\lambda} z) \qquad \forall \lambda,t \in \Rpos, z\in N, \alpha,\beta \in \Sigma,
\end{equation}
whence precise weighted $L^1$-estimates follow.

\begin{prp}\label{prp:n_heat_estimates}
For all $\alpha,\beta \in \Sigma$ and $\gamma_0 \in \Rnon$, there exists $C_{\alpha,\beta,\gamma_0} \in \Rpos$ such that, for all $\gamma \in [0,\gamma_0]$ and all $s \in \Rpos$,
\[
\Bigl\||\cdot|_N^{2\gamma} \, \vfN^\alpha (\vfN^\beta h_s^N)^* \Bigr\|_{L^1(N)} \leq C_{\alpha,\beta,\gamma_0} s^{\gamma-(|\alpha|+|\beta|)/2}.
\]
\end{prp}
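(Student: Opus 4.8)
The plan is to obtain the estimate purely by homogeneity, reducing everything to the single value $s=1$, and then to settle the $s=1$ case using the Gaussian bounds for the heat kernel on the stratified group $N$. First, since $\vfN_1,\dots,\vfN_\fdim$ span the first layer of $\lie n$, each $\vfN_j$ is $\delta_t$-homogeneous of degree $1$, so $\Delta^N$ is homogeneous of degree $2$ and the heat kernel scales as $h_s^N = s^{-Q/2}\,(h_1^N\circ\delta_{s^{-1/2}})$ for all $s>0$ (the power of $s$ being forced by $\|h_s^N\|_{L^1(N)}=1$ and $\det(\delta_r)=r^Q$). I would differentiate this identity, using $\vfN^\sigma(f\circ\delta_r)=r^{|\sigma|}\,((\vfN^\sigma f)\circ\delta_r)$ together with the fact that the automorphism $\delta_r$ commutes with inversion and hence, $N$ being unimodular, with $f\mapsto f^*$, to get
\[
\vfN^\alpha(\vfN^\beta h_s^N)^* = s^{-(Q+|\alpha|+|\beta|)/2}\,\bigl[\vfN^\alpha(\vfN^\beta h_1^N)^*\bigr]\circ\delta_{s^{-1/2}}.
\]

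Combining this with $|\delta_r z|_N=r\,|z|_N$ and the substitution $w=\delta_{s^{-1/2}}z$ in the defining integral (which also contributes the Jacobian factor $\det(\delta_{s^{1/2}})=s^{Q/2}$), all the powers of $s$ collapse and one finds
\[
\bigl\||\cdot|_N^{2\gamma}\,\vfN^\alpha(\vfN^\beta h_s^N)^*\bigr\|_{L^1(N)} = s^{\gamma-(|\alpha|+|\beta|)/2}\,\bigl\||\cdot|_N^{2\gamma}\,\vfN^\alpha(\vfN^\beta h_1^N)^*\bigr\|_{L^1(N)}.
\]
Hence it remains only to bound the $s=1$ quantity by a constant, uniformly for $\gamma\in[0,\gamma_0]$.

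For the $s=1$ estimate I would argue as in the proof of Proposition~\ref{prp:small_time_gaussian_heat}: from $h_1^N=h_{1/2}^N*h_{1/2}^N$, the fact that left-invariant vector fields differentiate the right-hand factor of a convolution, and self-adjointness of $h_{1/2}^N$, one obtains $\vfN^\alpha(\vfN^\beta h_1^N)^* = (\vfN^\beta h_{1/2}^N)^**\vfN^\alpha h_{1/2}^N$. The classical heat kernel bounds on stratified groups give $|\vfN^\gamma h_{1/2}^N(z)|\lesssim_\gamma e^{-b_\gamma|z|_N^2}$ (with no time-dependent exponential factor, by homogeneity); using also $|z^{-1}|_N=|z|_N$ for the starred factor and the subadditivity of $|\cdot|_N$, the convolution of two such functions is again $\lesssim e^{-b|\cdot|_N^2}$ for some $b>0$. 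Finally, by the integration formula on $N$ analogous to \eqref{eq:radialdensity} (with radial density $r^{Q-1}$) and the uniform bound $|z|_N^{2\gamma}\le 1+|z|_N^{2\gamma_0}$ valid for all $\gamma\in[0,\gamma_0]$,
\[
\bigl\||\cdot|_N^{2\gamma}\,\vfN^\alpha(\vfN^\beta h_1^N)^*\bigr\|_{L^1(N)} \lesssim \int_0^\infty (1+r^{2\gamma_0})\,e^{-br^2}\,r^{Q-1}\di r =: C_{\alpha,\beta,\gamma_0}<\infty,
\]
which is independent of $\gamma$ and completes the proof.

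I do not expect a genuine obstacle here: the content is essentially bookkeeping of homogeneity degrees. The one point deserving care is that the Gaussian bounds for $\vfN^\gamma h_s^N$ hold for \emph{all} $s>0$ with no factor $e^{\omega s}$ — this is specific to the stratified (homogeneous) setting, in contrast with the situation on $G$ in Proposition~\ref{prp:small_time_gaussian_heat} — together with verifying that the powers of $s$ contributed by the vector fields ($s^{-(|\alpha|+|\beta|)/2}$), the factor $s^{-Q/2}$ in $h_s^N$, the Jacobian of the dilation ($s^{Q/2}$), and the weight ($s^\gamma$) cancel to leave exactly the claimed exponent $\gamma-(|\alpha|+|\beta|)/2$.
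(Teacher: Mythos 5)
Your proposal is correct and follows essentially the same route as the paper: reduce to $s=1$ via the dilation identity \eqref{eq:n_heat_homogeneity} (after absorbing the weight uniformly in $\gamma\in[0,\gamma_0]$) and then verify finiteness of the weighted $L^1$ norm at $s=1$. The only cosmetic difference is that the paper settles the $s=1$ case by citing the Schwartz-class property of $h_1^N$ from Folland--Stein, whereas you rederive a pointwise Gaussian bound for $\vfN^\alpha(\vfN^\beta h_1^N)^*$ by convolving Gaussian estimates; both are standard and equivalent here.
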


\subsection{Weighted \texorpdfstring{$L^1$}{L1}-estimates of heat kernel derivatives}\label{subs:heat}

The heat kernel $h_t$ associated to $\Delta$ can be expressed in terms of the heat kernel $h_t^N$ associated to the sub-Laplacian $\Delta^N$ 
(see \cite[\S 3]{mustapha_multiplicateurs_1998} or \cite[\S 2]{gnewuch_differentiable_2006}):
\begin{equation}\label{eq:integral}
h_t(z,u) = \int_0^\infty \Psi_t(\xi) \, \exp\left(-\frac{\cosh u}{\xi}\right) h^N_{e^u \xi /2}(z) \di\xi,
\end{equation}
where
\begin{equation}\label{eq:defPsi}
\Psi_t(\xi) = \frac{\xi^{-2}}{\sqrt{4\pi^3 t}} \exp\left(\frac{\pi^2}{4t} \right) \int_0^\infty \sinh\theta \, \sin\frac{\pi\theta}{2t} \,\exp\left(-\frac{\theta^2}{4t} -\frac{\cosh \theta}{\xi}\right) \di\theta.
\end{equation}

The following identities and estimates involving the first derivative of the function $\xi\mapsto \xi \, \Psi_t(\xi)$ will be useful in the sequel. 

\begin{lem}\label{lem:cancellation}
We have the following:
\begin{enumerate}[label=(\roman*)]
\item\label{en:cancellation_der} $\partial_\xi [\xi \, \Psi_t(\xi)] 
= \frac{\xi^{-2}}{4\sqrt{\pi^3 t^3}} \int_0^\infty \cosh\theta \, \left[\pi \cos\frac{\pi\theta}{2t} -  \theta \sin\frac{\pi\theta}{2t}\right] 
\exp\left(\frac{\pi^2-\theta^2}{4t}-\frac{\cosh \theta}{\xi}\right) \di\theta$;
\item\label{en:cancellation_derint} $\int_1^\infty \partial_\xi [\xi \Psi_t(\xi)] \di t = \frac{\xi^{-2}}{\sqrt{4\pi^3}} \int_0^\infty \int_0^\pi \cos \frac{s\theta}{2} \exp\left(\frac{s^2 -\theta^2}{4}-\frac{\cosh \theta}{\xi}\right) \cosh\theta \di s \di\theta$;
\item\label{en:cancellation_derintest} $\left|\int_1^\infty \partial_\xi [\xi \Psi_t(\xi)] \di t\right| \leq C \xi^{-2} \int_0^\infty \exp\left(-\frac{\theta^2}{4}-\frac{\cosh \theta}{\xi}\right) \cosh\theta \di\theta$.
\end{enumerate}
\end{lem}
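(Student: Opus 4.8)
The plan is to establish the three items in the stated order: \ref{en:cancellation_der} is the base computation, \ref{en:cancellation_derint} the main step, and \ref{en:cancellation_derintest} will then be immediate.

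For \ref{en:cancellation_der}, I would first integrate by parts in $\theta$ inside the integral in \eqref{eq:defPsi}, after multiplying through by $\xi$. The relevant algebraic identity is $\sinh\theta\,e^{-\cosh\theta/\xi} = -\xi\,\partial_\theta\bigl(e^{-\cosh\theta/\xi}\bigr)$; carrying the derivative onto $\sin\tfrac{\pi\theta}{2t}\,e^{-\theta^2/(4t)}$ (whose $\theta$-derivative equals $\tfrac{1}{2t}[\pi\cos\tfrac{\pi\theta}{2t}-\theta\sin\tfrac{\pi\theta}{2t}]\,e^{-\theta^2/(4t)}$) gives
\[
\int_0^\infty \sinh\theta\,\sin\tfrac{\pi\theta}{2t}\,e^{-\theta^2/(4t)-\cosh\theta/\xi}\di\theta = \frac{\xi}{2t}\int_0^\infty\!\bigl[\pi\cos\tfrac{\pi\theta}{2t}-\theta\sin\tfrac{\pi\theta}{2t}\bigr]\,e^{-\theta^2/(4t)-\cosh\theta/\xi}\di\theta,
\]
the boundary terms vanishing because $\sin 0 = 0$ and $e^{-\cosh\theta/\xi}$ decays super-exponentially as $\theta\to\infty$. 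Hence $\xi\Psi_t(\xi) = \frac{e^{\pi^2/(4t)}}{2t\sqrt{4\pi^3 t}}\int_0^\infty[\pi\cos\tfrac{\pi\theta}{2t}-\theta\sin\tfrac{\pi\theta}{2t}]\,e^{-\theta^2/(4t)-\cosh\theta/\xi}\di\theta$, an expression whose only $\xi$-dependence is the factor $e^{-\cosh\theta/\xi}$. Differentiating under the integral sign (justified by dominated convergence, since $\tfrac{\cosh\theta}{\xi^2}e^{-\cosh\theta/\xi}$ is bounded locally uniformly in $\xi$ and the other factors are integrable in $\theta$) and using $\partial_\xi e^{-\cosh\theta/\xi} = \tfrac{\cosh\theta}{\xi^2}e^{-\cosh\theta/\xi}$ yields the claimed formula, after rewriting $e^{\pi^2/(4t)}e^{-\theta^2/(4t)} = e^{(\pi^2-\theta^2)/(4t)}$ and $\tfrac{1}{2t\sqrt{4\pi^3 t}} = \tfrac{1}{4\sqrt{\pi^3 t^3}}$.

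For \ref{en:cancellation_derint}, I would insert the formula from \ref{en:cancellation_der} into $\int_1^\infty \partial_\xi[\xi\Psi_t(\xi)]\di t$ and exchange the $t$- and $\theta$-integrals by Fubini; this is legitimate because, for $t\ge 1$, $e^{(\pi^2-\theta^2)/(4t)}\le e^{\pi^2/4}e^{-\theta^2/(4t)}$ and $\int_1^\infty t^{-3/2}e^{-\theta^2/(4t)}\di t\le\int_1^\infty t^{-3/2}\di t<\infty$, so the double integral converges absolutely against $\cosh\theta\,e^{-\cosh\theta/\xi}$. It then remains to evaluate, for fixed $\theta\ge 0$, the inner integral $I(\theta):=\int_1^\infty t^{-3/2}[\pi\cos\tfrac{\pi\theta}{2t}-\theta\sin\tfrac{\pi\theta}{2t}]\,e^{(\pi^2-\theta^2)/(4t)}\di t$. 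The crucial observation is that the integrand equals $\operatorname{Re}\bigl[(\pi+i\theta)\,e^{(\pi+i\theta)^2/(4t)}\bigr]$, since $(\pi+i\theta)^2 = \pi^2-\theta^2+2\pi i\theta$; the substitution $r=t^{-1/2}$ turns $\int_1^\infty t^{-3/2}e^{(\pi+i\theta)^2/(4t)}\di t$ into $2\int_0^1 e^{(\pi+i\theta)^2 r^2/4}\di r$, and then the change of variable $w=r(\pi+i\theta)$ rewrites $2(\pi+i\theta)\int_0^1 e^{(\pi+i\theta)^2 r^2/4}\di r$ as $2\int_{[0,\pi+i\theta]}e^{w^2/4}\di w$, the integral of the entire function $e^{w^2/4}$ over the straight segment from $0$ to $\pi+i\theta$. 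By Cauchy's theorem I would deform this segment to the broken path $[0,i\theta]\cup[i\theta,\pi+i\theta]$: over $[0,i\theta]$ (parametrised by $w=i\tau$) the integral is $i\int_0^\theta e^{-\tau^2/4}\di\tau$, which is purely imaginary, while over $[i\theta,\pi+i\theta]$ (parametrised by $w=s+i\theta$, $s\in[0,\pi]$) it equals $\int_0^\pi e^{(s^2-\theta^2)/4}\bigl(\cos\tfrac{s\theta}{2}+i\sin\tfrac{s\theta}{2}\bigr)\di s$. Taking real parts gives $I(\theta) = 2\int_0^\pi \cos\tfrac{s\theta}{2}\,e^{(s^2-\theta^2)/4}\di s$; substituting back and using $\tfrac{1}{4\sqrt{\pi^3}}\cdot 2 = \tfrac{1}{\sqrt{4\pi^3}}$ yields \ref{en:cancellation_derint}.

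Finally, \ref{en:cancellation_derintest} is immediate from \ref{en:cancellation_derint}: on $s\in[0,\pi]$ one has $|\cos\tfrac{s\theta}{2}|\le 1$ and $e^{s^2/4}\le e^{\pi^2/4}$, so the inner $s$-integral in \ref{en:cancellation_derint} is at most $\pi e^{\pi^2/4}e^{-\theta^2/4}$, and the asserted bound holds with $C=\pi e^{\pi^2/4}/\sqrt{4\pi^3}$. The routine ingredients here are the two integrations by parts, the differentiation under the integral sign, and the Fubini/dominated-convergence checks; I expect the only genuinely delicate point to be the evaluation of $I(\theta)$ in \ref{en:cancellation_derint}, since that identity admits no real one-dimensional substitution and hinges on complexifying the integrand and sliding the contour of $\int e^{w^2/4}\di w$ past the corner $i\theta$ — precisely the manoeuvre that trades the time variable $t\in(1,\infty)$ for the truncated variable $s\in(0,\pi)$.
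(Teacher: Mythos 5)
Your proposal is correct and, for the substantive parts \ref{en:cancellation_derint} and \ref{en:cancellation_derintest}, follows exactly the paper's route: complexifying the integrand as $\Re\bigl[(\pi+i\theta)\,t^{-3/2}e^{(\pi+i\theta)^2/(4t)}\bigr]$, rewriting the $t$-integral as a contour integral of $e^{w^2/4}$ from $0$ to $\pi+i\theta$, deforming the path past the corner $i\theta$, and discarding the purely imaginary leg before taking real parts. The only difference is that for \ref{en:cancellation_der} the paper simply cites \cite[Proposition 4.2]{MOV}, whereas you rederive the formula by integrating by parts in $\theta$ and then differentiating under the integral sign; your derivation is valid.
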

\begin{proof}
The proof of \ref{en:cancellation_der} is given in \cite[Proposition 4.2]{MOV}. The identity \ref{en:cancellation_derint} follows from \ref{en:cancellation_der} and  
\[
\int_1^\infty  \exp\left(\frac{\pi^2 - \theta^2}{4t}\right) \left[ \pi \cos \frac{\pi\theta}{2t} - \theta \sin \frac{\pi\theta}{2t} \right] \frac{\ndi t}{t^{3/2}} = 2 \int_0^\pi \exp \left( \frac{s^2-\theta^2}{4} \right) \cos\frac{s\theta}{2} \di s,
\]
which we now prove. Note first that
\[
\frac{1}{t^{3/2}} \exp\left(\frac{\pi^2 - \theta^2}{4t}\right) \left[ \pi \cos \frac{\pi\theta}{2t} - \theta \sin \frac{\pi\theta}{2t} \right] = \Re \left[ \frac{\pi+i\theta}{t^{3/2}} \exp \left( \frac{(\pi+i\theta)^2}{4t} \right)\right]
\]
and
\[
\int_1^\infty \frac{\pi+i\theta}{t^{3/2}} \exp \left( \frac{(\pi+i\theta)^2}{4t} \right) \di t 
= 2 \int_0^{\pi+i\theta} \exp \left( \frac{w^2}{4} \right) \di w,
\]
where the latter is meant as a contour integral in the complex plane. Moreover,
\[
\int_0^{\pi+i\theta} \exp \left( \frac{w^2}{4} \right) \di w = \int_0^{i\theta} + \int_{i\theta}^{\pi + i\theta}
\]
and the first summand is purely imaginary, as it is easily seen by integrating along a straight line. In conclusion,
\begin{multline*}
\int_1^\infty \exp\left(\frac{\pi^2 - \theta^2}{4t}\right) \left[ \pi \cos \frac{\pi\theta}{2t} - \theta \sin \frac{\pi\theta}{2t} \right] \frac{\ndi t}{t^{3/2}} \\
= 2 \Re \int_{i\theta}^{\pi+i\theta} \exp \left( \frac{w^2}{4} \right) \di w = 2 \int_{0}^{\pi} \Re \exp \left( \frac{(s+i\theta)^2}{4} \right) \di s
\end{multline*}
and \ref{en:cancellation_derint} is proved.

The estimate \ref{en:cancellation_derintest} is an immediate consequence of \ref{en:cancellation_derint}. 
\end{proof}

The following technical lemma, which we will repeatedly use in the sequel, extends \cite[Lemma 4.1]{MOV} and can be proved following the same lines.

\begin{lem}\label{lem:innerintegral}
For all $\alpha,\beta,\theta \in \Rnon$ with $\beta \geq \alpha$, and $\delta \in [0,1/2]$,
\begin{multline*}
\int_\RR \int_0^\infty \frac{\cosh(\alpha u)}{\xi^{2+\beta}} \,  (\xi^\delta + (\cosh u)^\delta) \, \exp\left(-\frac{\cosh \theta+\cosh u}{\xi}\right) \di\xi \di u \\
\leq C_{\alpha,\beta} \begin{cases}
e^{(\delta-1 -\beta+\alpha)\theta} &\text{if $\alpha > 0$,}\\
e^{(\delta-1 -\beta)\theta} (1+\theta) &\text{if $\alpha = 0$,}
\end{cases}
\end{multline*}
where the constant $C_{\alpha,\beta}$ does not depend on $\delta$.
\end{lem}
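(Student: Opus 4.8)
The plan is to carry out the $\xi$-integration explicitly, then reduce the two summands in the integrand to a single one, and finally estimate the resulting one-dimensional integral in $u$ by splitting the range at $u=\theta$.

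First I would use Tonelli to integrate in $\xi$ before $u$. The substitution $\eta=1/\xi$ turns $\int_0^\infty \xi^{-a}e^{-c/\xi}\di\xi$ into
\[
\int_0^\infty \xi^{-a}e^{-c/\xi}\di\xi = \int_0^\infty \eta^{a-2}e^{-c\eta}\di\eta = \Gamma(a-1)\,c^{-(a-1)},
\]
valid whenever $a>1$ and $c>0$. Applying this with $a=2+\beta-\delta$ (for the $\xi^\delta$ summand) and $a=2+\beta$ (for the $(\cosh u)^\delta$ summand), and with $c=\cosh\theta+\cosh u$, the whole double integral is bounded by
\[
C_\beta\int_\RR \cosh(\alpha u)\left[(\cosh\theta+\cosh u)^{\delta-1-\beta} + (\cosh u)^\delta(\cosh\theta+\cosh u)^{-1-\beta}\right]\di u,
\]
where $C_\beta$ absorbs the factors $\Gamma(1+\beta-\delta)$ and $\Gamma(1+\beta)$; since $1+\beta-\delta$ stays in the compact interval $[1/2+\beta,1+\beta]$ as $\delta$ ranges over $[0,1/2]$, this $C_\beta$ is independent of $\delta$, as the statement requires.

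Next, since $\cosh u\le\cosh\theta+\cosh u$ and $\delta\ge0$, one has $(\cosh u)^\delta\le(\cosh\theta+\cosh u)^\delta$, so the second summand above is dominated by the first. Writing $\gamma:=1+\beta-\delta$, the problem thus reduces to bounding
\[
J(\theta):=\int_\RR \frac{\cosh(\alpha u)}{(\cosh\theta+\cosh u)^\gamma}\di u,
\]
where, crucially, $\gamma\ge 1/2+\beta\ge 1/2+\alpha$ thanks to $\beta\ge\alpha$; in particular $\gamma-\alpha\ge 1/2$ uniformly in $\delta$. To estimate $J(\theta)$ I would use evenness of the integrand to reduce to $u\in[0,\infty)$ and then split at $u=\theta$. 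On $[0,\theta]$, bound $\cosh\theta+\cosh u\ge\cosh\theta\ge e^\theta/2$ and compute $\int_0^\theta\cosh(\alpha u)\di u$, which equals $\sinh(\alpha\theta)/\alpha\le e^{\alpha\theta}/(2\alpha)$ if $\alpha>0$ and equals $\theta$ if $\alpha=0$; this gives a contribution $\lesssim_{\alpha,\beta} e^{(\alpha-\gamma)\theta}$ when $\alpha>0$ and $\lesssim_\beta \theta\,e^{-\gamma\theta}$ when $\alpha=0$. On $[\theta,\infty)$, bound $\cosh\theta+\cosh u\ge\cosh u\ge e^u/2$ and $\cosh(\alpha u)\le e^{\alpha u}$, so this tail is at most $C_\beta\int_\theta^\infty e^{(\alpha-\gamma)u}\di u = C_\beta(\gamma-\alpha)^{-1}e^{(\alpha-\gamma)\theta}\le C_{\alpha,\beta}e^{(\alpha-\gamma)\theta}$, using $\gamma-\alpha\ge 1/2$. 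Adding the two contributions and recalling $\alpha-\gamma=\delta-1-\beta+\alpha$ yields exactly the claimed bound: $e^{(\delta-1-\beta+\alpha)\theta}$ up to a constant when $\alpha>0$, and $(1+\theta)\,e^{(\delta-1-\beta)\theta}$ up to a constant when $\alpha=0$. All constants depend only on $\alpha$ and $\beta$, since $\gamma$ lives in the fixed compact interval $[1/2+\beta,1+\beta]$ on which $\Gamma(\gamma)$, $2^\gamma$ and $(\gamma-\alpha)^{-1}$ are bounded.

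I do not expect a serious obstacle: the argument is a Gamma-integral followed by a dyadic-type splitting. The only points needing attention are (i) the uniformity of the constants in $\delta$, which follows from the observation that $\gamma=1+\beta-\delta$ stays in a fixed compact interval bounded away from $\alpha$, and (ii) in the case $\alpha=0$, keeping track of the factor $\theta$ produced by $\int_0^\theta\di u$, which is precisely the source of the $(1+\theta)$ in the statement.
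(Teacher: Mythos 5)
Your proof is correct and follows essentially the same route as the paper's: integrate out $\xi$ first by a scaling/Gamma-function computation (with the constant uniform in $\delta$ because $1+\beta-\delta$ stays in a compact interval bounded away from $0$), then estimate the resulting one-dimensional integral in $u$. The paper treats the two summands separately via the substitution $e^u = e^\theta v$, whereas you dominate the $(\cosh u)^\delta$ term by the $\xi^\delta$ term and split the $u$-integral at $u=\theta$; both variants correctly produce the extra factor $(1+\theta)$ in the case $\alpha=0$.
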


We now derive weighted $L^1$-estimates for $h_t$ and its horizontal gradient, whose unweighted version is proved in \cite[Proposition 4.2]{MOV}. 

\begin{prp}\label{prp:gradientestimates}
For all $\epsilon \in \Rnon$, there exists $C_\epsilon \in \Rpos$ such that, for all $t \in \Rpos$,
\[
\left\| e^{\epsilon |\cdot|_\dist / t^{1/2}} \, h_t \right\|_{1} \leq C_\epsilon, \qquad \left\| e^{\epsilon |\cdot|_\dist / t^{1/2}} \, \left|\nabla_H h_t\right|_g\right\|_{1} \leq C_\epsilon \, t^{-1/2}.
\]
 \end{prp}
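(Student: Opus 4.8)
The plan is to split according to whether $t$ is small or large, with threshold $t_0:=\max\{1,4\epsilon^2\}$, so that in the large range $a:=\epsilon/t^{1/2}\in[0,1/2]$.

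For $t\le t_0$ both estimates follow at once from Proposition \ref{prp:small_time_gaussian_heat}: taking there $\alpha=\beta=\emptyset$ (and using $(\vfG^\emptyset h_t)^*=h_t$) gives $\|e^{\epsilon|\cdot|_\dist/t^{1/2}}h_t\|_1\le C_{\epsilon,t_0}$, while taking $\alpha=(j)$, $\beta=\emptyset$ for $j=0,\dots,\fdim$ gives $\|e^{\epsilon|\cdot|_\dist/t^{1/2}}\vfG_jh_t\|_1\le C_{\epsilon,t_0}\,t^{-1/2}$, whence the gradient estimate via the pointwise bound $|\nabla_Hh_t|_g\le\sum_{j=0}^\fdim|\vfG_jh_t|$ from \eqref{eq:nablaH}.

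For $t\ge t_0$ I would run the argument of \cite[Proposition 4.2]{MOV} (which proves the unweighted versions) while carrying the weight along. Starting from the subordination formula \eqref{eq:integral} and differentiating under the integral sign — legitimate by the Gaussian bounds on $h^N_s$ and its derivatives — one writes $h_t$ and each $\vfG_jh_t$ as a finite sum of integrals $\int_0^\infty\Psi_t(\xi)\,p\,e^{-\cosh u/\xi}\,\vfN^\gamma h^N_{e^u\xi/2}(z)\di\xi$, where $p$ is a product of (possibly negative) powers of $\xi$, $e^u$, $\cosh u$, $\sinh u$ and $\gamma\in\Sigma$ with $|\gamma|\le 2$: indeed $\vfG_j=e^u\vfN_j$ differentiates the $z$-slot, while $\vfG_0=\partial_u$ produces either a factor $-\sinh u/\xi$ from the exponential or, via $\partial_sh^N_s=-\Delta^Nh^N_s$, a factor $-e^u\xi/2$ together with a second-order $N$-derivative. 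Multiplying by $e^{\epsilon|\cdot|_\dist/t^{1/2}}$ and integrating over $G=N\times\RR$, I would perform the $N$-integration first. By Proposition \ref{prp:distance}, with $w=|(z,u)|_\dist$ and $\cosh w=\cosh u+e^{-u}|z|_N^2/2$, one has $e^{w}\le2\cosh w$, so
\[
e^{\epsilon|(z,u)|_\dist/t^{1/2}}=(e^{w})^{a}\le\bigl(2\cosh u+e^{-u}|z|_N^2\bigr)^{a}\le(2\cosh u)^{a}+e^{-au}|z|_N^{2a}\qquad(0\le a\le1),
\]
and Proposition \ref{prp:n_heat_estimates} (with $\gamma_0=1/2$) then yields $\int_Ne^{\epsilon|(z,u)|_\dist/t^{1/2}}|\vfN^\gamma h^N_{e^u\xi/2}|\di z\lesssim_\epsilon\bigl((\cosh u)^{1/2}+\xi^{1/2}\bigr)(e^u\xi)^{-|\gamma|/2}$. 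After this the leftover powers of $e^u$ are absorbed against the $N$-derivative factors $(e^u\xi)^{-|\gamma|/2}$ (with the help of elementary estimates such as $re^{-r}\lesssim e^{-r/2}$), and each term is dominated by a constant — resp.\ by $t^{-1/2}$ times a constant, for the gradient — times an expression to which Lemma \ref{lem:innerintegral} applies with $\delta=1/2$ (whence the constraint $a\le1/2$) and $\alpha,\beta\in\{0,1\}$. Thus the weight has been fully absorbed into the passage from $\delta=0$, used in the unweighted argument, to $\delta\in[0,1/2]$.

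The crux — already the crux in \cite{MOV} — is that one may not bound $|\Psi_t(\xi)|$ pointwise: the oscillatory factor $\sin(\pi\theta/2t)$ in \eqref{eq:defPsi} carries a cancellation of size $\sim t^{1/2}$, and without it one is off by that factor for large $t$. The remedy is to integrate by parts in $\xi$ — writing $e^{-\cosh u/\xi}=(\xi^2/\cosh u)\,\partial_\xi e^{-\cosh u/\xi}$, which conveniently also produces the decaying factor $1/\cosh u$ — so as to bring in $\partial_\xi[\xi\Psi_t(\xi)]$, for which Lemma \ref{lem:cancellation}\ref{en:cancellation_der} provides a representation with a gained power $t^{-1}$ relative to $\Psi_t$; the $\xi$- and $u$-integrations are then carried out by Lemma \ref{lem:innerintegral}, reducing everything to an elementary integral in $\theta$ whose value contributes exactly the power of $t$ needed to compensate the prefactor of $\partial_\xi[\xi\Psi_t]$, leaving $\lesssim 1$ for $h_t$ and $\lesssim t^{-1/2}$ for the gradient. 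Organising this cancellation correctly, and checking that none of the weighted bookkeeping above breaks it, is where I expect the real work to lie.
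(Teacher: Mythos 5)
Your overall skeleton (the small/large $t$ split with $\epsilon/t_0^{1/2}\le 1/2$, the subordination formula \eqref{eq:integral}, the weight decomposition \eqref{eq:expweight_dec}, the $N$-integration via Proposition \ref{prp:n_heat_estimates}, then Lemma \ref{lem:innerintegral}) is exactly the paper's, but two steps in your large-time bookkeeping fail as written. First, you must not upgrade the weight exponent $a=\epsilon/t^{1/2}$ to its worst case $1/2$ before the $\theta$-integration: with $\delta=1/2$, Lemma \ref{lem:innerintegral} returns $e^{(1/2-1-\beta+\alpha)\theta}=e^{-\theta/2}$ in the relevant cases $\alpha=\beta$, and the final $\theta$-integral then contains $\sinh\theta\cdot e^{-\theta/2}\cdot(\theta/t)\,e^{-\theta^2/4t}\sim e^{\theta/2}(\theta/t)e^{-\theta^2/4t}$, whose integral is of order $e^{t/4}$. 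The uniformity in $\delta\in[0,1/2]$ of the constant in Lemma \ref{lem:innerintegral} is there precisely so that you can keep $\delta=a$, end with the residual $e^{a\theta}=e^{\epsilon\theta/t^{1/2}}$, and absorb it into the Gaussian $e^{-\theta^2/4t}$ uniformly in $t$ after substituting $\theta=t^{1/2}\sigma$. (Relatedly, your diagnosis of ``the crux'' is off: for $h_t$ and $\vfG_j h_t$ with $j\ge1$ the paper does bound the integrand of $\Psi_t$ by its absolute value, using only $|\sin(\pi\theta/2t)|\le\pi\theta/(2t)$ in \eqref{eq:defPsi}; no integration by parts is needed for those terms.)

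Second, your treatment of $\vfG_0 h_t$ via the heat equation $\partial_s h^N_s=-\Delta^N h^N_s$ does not yield $t^{-1/2}$. The resulting term $\int_0^\infty\Psi_t(\xi)e^{-\cosh u/\xi}\,\tfrac{e^u\xi}{2}\Delta^N h^N_{e^u\xi/2}(z)\di\xi$ has, after the $N$-integration, no decay in $u$ or $\xi$ beyond the exponential, so Lemma \ref{lem:innerintegral} must be applied with $\alpha=\beta=0$ and produces the extra factor $(1+\theta)$; on the relevant scale $\theta\sim t^{1/2}$ this costs a factor $t^{1/2}$ and the bound degrades to $O(1)$. The paper instead writes $\partial_u[h^N_{e^u\xi/2}]=\xi\partial_\xi[h^N_{e^u\xi/2}]$, integrates by parts in $\xi$, and combines the boundary contribution with the $-\sinh u/\xi$ term via $\sinh u+\cosh u=e^u$; this yields exactly $I_1$ (handled with $\alpha=\beta=1$) plus $I_2$ containing $\partial_\xi[\xi\Psi_t(\xi)]$, whose representation in Lemma \ref{lem:cancellation}\ref{en:cancellation_der} carries the prefactor $t^{-3/2}$ that compensates the lost $(1+\theta)$. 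Your closing suggestion to integrate by parts using $e^{-\cosh u/\xi}=(\xi^2/\cosh u)\partial_\xi e^{-\cosh u/\xi}$ does not produce $\partial_\xi[\xi\Psi_t]$ cleanly (it generates leftover $\Psi_t$ terms and $\xi$-derivatives of $h^N_{e^u\xi/2}$), so the one cancellation that genuinely requires organising --- the one in the $\vfG_0$-derivative --- is not actually secured by your argument.
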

\begin{proof}
Fix $t_0 \in \Rpos$ sufficiently large so that $\epsilon/t_0^{1/2} \leq 1/2$.

By \eqref{eq:nablaH} it suffices to show that
\[
\|e^{\epsilon |\cdot|_\dist/t^{1/2}} \, h_t\|_{1} \leq C_\epsilon, \qquad \|e^{\epsilon |\cdot|_\dist/t^{1/2}} \, \vfG_j h_t\|_{1} \leq C_\epsilon \, t^{-1/2}
\]
for all $j \in \{0,1,\dots,\fdim\}$ and $t \in \Rpos$. In the case $t \leq t_0$, these estimates are given by Proposition \ref{prp:small_time_gaussian_heat}. Therefore in the rest of the proof we will assume that $t \geq t_0$. We remark that the constants in these estimates may depend on $\epsilon$, hence on $t_0$; therefore the same dependence is allowed for all the implicit constants in the estimates throughout the proof.

If $\gamma_t = \epsilon/t^{1/2}$ and $t \geq t_0$, then $\gamma_t \in [0,1/2]$ and therefore, for all $(z,u) \in G$, by Proposition \ref{prp:distance},
\begin{equation}\label{eq:expweight_dec}
\exp( \gamma_t |(z,u)|_\dist ) \leq (\cosh |(z,u)|_\dist)^{\gamma_t} \lesssim (\cosh u)^{\gamma_t} + (e^{-u} |z|_N^2)^{\gamma_t},
\end{equation}
where the implicit constant does not depend on $t \in [t_0,\infty)$.

We discuss first the estimate for $\vfG_j h_t$ in the case $j>0$. Recall that $\vfG_j = e^{u} \vfN_j$. Then, by \eqref{eq:integral} and differentiation under the integral sign,
\begin{equation}\label{eq:der_heat}
\vfG_j h_t(z,u) = \int_0^\infty \Psi_t(\xi) \, \exp\left(-\frac{\cosh u}{\xi}\right) e^{u} \vfN_j h^N_{e^u \xi /2}(z) \di\xi.
\end{equation}
Therefore, by \eqref{eq:expweight_dec} and Proposition \ref{prp:n_heat_estimates},
\[\begin{split}
\|e^{\gamma_t |\cdot|_\dist} &\,\vfG_j h_t\|_1 \\
&\lesssim \int_\RR \int_0^\infty (\cosh u)^{\gamma_t} \Bigl\| \vfN_j h^N_{e^u \xi /2}\Bigr\|_{L^1(N)}  |\Psi_t(\xi)| \, \exp\left(-\frac{\cosh u}{\xi}\right)  e^u \di\xi \di u  \\
&+ \int_\RR \int_0^\infty e^{-\gamma_t u} \Bigl\| |\cdot|_N^{2\gamma_t} \, \vfN_j h^N_{e^u \xi /2}\Bigr\|_{L^1(N)}  |\Psi_t(\xi)| \, \exp\left(-\frac{\cosh u}{\xi}\right)  e^u \di\xi \di u \\
&\lesssim \int_\RR \int_0^\infty |\Psi_t(\xi)| \, \frac{e^{u/2}}{\xi^{1/2}} ((\cosh u)^{\gamma_t} + \xi^{\gamma_t})  \exp\left(-\frac{\cosh u}{\xi}\right) \di\xi \di u .
\end{split}\]
Since $t\geq t_0$, by \eqref{eq:defPsi} the above integral is controlled by a constant (depending on $t_0$, but not on $t$) times
\begin{multline*}
t^{-1/2} \int_0^\infty \sinh\theta \, \left|\sin\frac{\pi\theta}{2t} \right| \,\exp\left(-\frac{\theta^2}{4t}\right) \\
\times \int_\RR \int_0^\infty \frac{e^{u/2}}{\xi^{2+1/2}} ((\cosh u)^{\gamma_t} + \xi^{\gamma_t}) \exp\left(-\frac{\cosh \theta+\cosh u}{\xi}\right) \di\xi \di u \di\theta.
\end{multline*}
By applying Lemma \ref{lem:innerintegral} (with $\alpha=\beta=1/2$), the integral in $u$ and $\xi$ is controlled by a constant times $e^{(\gamma_t-1)\theta}$, hence
\[
\|e^{\gamma_t |\cdot|_\dist} \,\vfG_j h_t\|_1 \lesssim t^{-1/2} \int_0^\infty \frac{\sinh\theta}{e^\theta} \, \frac{\theta}{t}  \,\exp\left(\frac{\epsilon \theta}{t^{1/2}}-\frac{\theta^2}{4t}\right) \di \theta \lesssim t^{-1/2}.
\]
This proves the estimate for $\vfG_j h_t$ in the case $j>0$. A similar argument, using Lemma \ref{lem:innerintegral} with $\alpha=\beta=0$, gives the estimate for $h_t$.

It remains to discuss the estimate for $\vfG_0 h_t$. Note that, again by \eqref{eq:integral},
\begin{equation}\label{eq:heat_X0der}
\begin{split}
\vfG_0 h_t(z,u) 
&= -\int_0^\infty \Psi_t(\xi) \, \frac{\sinh u}{\xi} \, \exp\left(-\frac{\cosh u}{\xi}\right) h^N_{e^u \xi /2}(z) \di \xi \\
&\qquad + \int_0^\infty \Psi_t(\xi) \, \exp\left(-\frac{\cosh u}{\xi}\right) \frac{\partial}{\partial u} [h^N_{e^u \xi /2}(z)] \di \xi \\
&= -\int_0^\infty \Psi_t(\xi) \, \frac{e^u}{\xi} \, \exp\left(-\frac{\cosh u}{\xi}\right) h^N_{e^u \xi /2}(z) \di \xi \\
&\qquad -\int_0^\infty \partial_\xi [\xi \, \Psi_t(\xi)] \, \exp\left(-\frac{\cosh u}{\xi}\right) \, h^N_{e^u \xi /2}(z) \di \xi \\
&= I_1 + I_2 .
\end{split}
\end{equation}
Here the fact that $\partial_u [h^N_{e^u \xi /2}(z)] = \xi \partial_\xi [h^N_{e^u \xi /2}(z)]$, integration by parts and the identity $\sinh u + \cosh u = e^u$ were used.

The norm $\|e^{\gamma_t |\cdot|_\dist}\, I_1 \|_1$ of the summand $I_1$ can be controlled analogously as above (here Lemma \ref{lem:innerintegral} is applied with $\alpha=\beta=1$). As for $I_2$, we observe that, by \eqref{eq:expweight_dec} and Proposition \ref{prp:n_heat_estimates},
\[\begin{split}
\|e^{\gamma_t |\cdot|_\dist}\, &I_2 \|_1 \\
&\lesssim \int_\RR \int_0^\infty (\cosh u)^{\gamma_t} \bigl\| h^N_{e^u \xi /2}\bigr\|_{L^1(N)}  \left| \partial_\xi [\xi \, \Psi_t(\xi)]\right| \, \exp\left(-\frac{\cosh u}{\xi}\right)  \di\xi \di u  \\
&+ \int_\RR \int_0^\infty e^{-\gamma_t u} \Bigl\| |\cdot|_N^{2\gamma_t} \, h^N_{e^u \xi /2}\Bigr\|_{L^1(N)} \left| \partial_\xi [\xi \, \Psi_t(\xi)]\right| \, \exp\left(-\frac{\cosh u}{\xi}\right)  \di\xi \di u \\
&\lesssim \int_\RR \int_0^\infty  \left| \partial_\xi [\xi \, \Psi_t(\xi)]\right|   \,  ((\cosh u)^{\gamma_t} + \xi^{\gamma_t})  \exp\left(-\frac{\cosh u}{\xi}\right) \di\xi \di u.
\end{split}\]
Consequently, by Lemma \ref{lem:cancellation}\ref{en:cancellation_der},
since $t \geq t_0$, $\|e^{\gamma_t |\cdot|_\dist}\, I_2 \|_1$ is bounded by a constant (depending on $t_0$) times
\begin{multline*}
t^{-3/2} \int_0^\infty \cosh\theta \, \left|\pi \cos\frac{\pi\theta}{2t} -  \theta \sin\frac{\pi\theta}{2t}\right| \exp\left(-\frac{\theta^2}{4t}\right) \\
\times \int_\RR \int_0^\infty \xi^{-2} \, ((\cosh u)^{\gamma_t} + \xi^{\gamma_t}) \, \exp\left(-\frac{\cosh \theta + \cosh u}{\xi}\right) \di\xi \di u \di\theta.
\end{multline*}
By applying Lemma \ref{lem:innerintegral} (with $\alpha=\beta=0$), the integral in $\xi$ and $u$ is controlled by a constant times $e^{(\gamma_t-1)\theta} (1+\theta)$, hence
\[
\|e^{\gamma_t |\cdot|_\dist}\,I_2\|_{1} \lesssim t^{-3/2} \int_0^\infty \frac{\cosh\theta}{e^\theta} (1 + \theta^2/t) \, (1+\theta) \, \exp\left(\frac{\epsilon \theta}{t^{1/2}}-\frac{\theta^2}{4t}\right) \di\theta \lesssim t^{-1/2}
\]
and we are done.
\end{proof}

\begin{rem}
Simple modifications of the proof of Proposition \ref{prp:gradientestimates} yield that
\[
\left\| e^{\epsilon |\cdot|_\dist / t^{1/2}} \, \vfG^\alpha h_t \right\|_{1} \leq C_{\epsilon,\alpha} \max\{t^{-|\alpha|/2}, t^{-1/2}\}
\]
for all $\epsilon \in \Rnon$, $t \in \Rpos$ and $\alpha \in \Sigma$ with $|\alpha|>0$.
\end{rem}

Similar techniques as above yield estimates for certain second-order derivatives of the heat kernel, which show an ``extra decay'' for large time.

\begin{prp}\label{prp:gradientestimates_mixed}
For all $\epsilon \in \Rnon$, there exists $C_{\epsilon} \in \Rpos$ such that, for all $t \in \Rpos$ and $j,l=1,\dots,\fdim$,
\[
\left\| e^{\epsilon |\cdot|_\dist / t^{1/2}} \, \vfG_l (\vfG_j h_t)^* \right\|_{1} \leq C_{\epsilon} \, \min\{t^{-1}, t^{-3/2}\}.
\]
\end{prp}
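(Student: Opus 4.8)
The plan is to follow the scheme of the proof of Proposition~\ref{prp:gradientestimates}, splitting into small and large time. Fix $t_0\in\Rpos$ large enough that $\epsilon/t_0^{1/2}\le 1/2$ and $t_0\ge 1$. For $t\le t_0$ there is nothing to do: Proposition~\ref{prp:small_time_gaussian_heat} (with $|\alpha|=|\beta|=1$) already gives $\|e^{\epsilon|\cdot|_\dist/t^{1/2}}\,\vfG_l(\vfG_j h_t)^*\|_1\lesssim t^{-1}$, and $t^{-1}\le\min\{t^{-1},t^{-3/2}\}$ on $(0,t_0]$. So the substance of the argument is the range $t\ge t_0$, where, with $\gamma_t=\epsilon/t^{1/2}\in[0,1/2]$, one must prove $\|e^{\gamma_t|\cdot|_\dist}\,\vfG_l(\vfG_j h_t)^*\|_1\lesssim t^{-3/2}$. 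I would first produce a closed integral representation of $\vfG_l(\vfG_j h_t)^*$ in the spirit of \eqref{eq:der_heat}, and then run the same chain of estimates as in Proposition~\ref{prp:gradientestimates}: reduce the $N$-factor via Proposition~\ref{prp:n_heat_estimates}, substitute \eqref{eq:defPsi} for $\Psi_t$, bound the inner integral in $(\xi,u)$ by Lemma~\ref{lem:innerintegral}, and estimate the remaining integral in $\theta$. The extra $t^{-1/2}$ over the first-order case will arise because the two $N$-direction derivatives, together with the involution, upgrade the exponent produced by Lemma~\ref{lem:innerintegral} from $\gamma_t-1$ to $\gamma_t-2$.

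For the representation, I would start from the identity $(\vfG_j h_t)^*(z,u)=m(z,u)\,(\vfG_j h_t)((z,u)^{-1})=e^{-Qu}\,(\vfG_j h_t)(-\delta_{e^{-u}}z,-u)$ (valid since $h_t$ is real), substitute formula \eqref{eq:der_heat} at the point $(-\delta_{e^{-u}}z,-u)$, and then apply $\vfG_l|_{(z,u)}=e^u\vfN_l$ in the $z$-variable. Since $-\delta_{e^{-u}}z=(\delta_{e^{-u}}z)^{-1}$ in the exponential coordinates on $N$, the symmetry $h^N_s(z^{-1})=h^N_s(z)$ lets one rewrite $(\vfN_j h^N_s)((\delta_{e^{-u}}z)^{-1})$ as $-(\vfN_j^{\mathrm r}h^N_s)(\delta_{e^{-u}}z)$, where $\vfN_j^{\mathrm r}$ denotes the right-invariant vector field on $N$ agreeing with $\vfN_j$ at the identity (up to sign). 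As $\vfN_l$ (left-invariant) commutes with $\vfN_j^{\mathrm r}$, and both are $\delta_\lambda$-homogeneous of degree $1$, one is led to
\[
\vfG_l(\vfG_j h_t)^*(z,u)
= -\,e^{-(Q+1)u} \int_0^\infty \Psi_t(\xi)\, \exp\left(-\frac{\cosh u}{\xi}\right) \bigl(\vfN_j^{\mathrm r}\vfN_l\, h^N_{e^{-u}\xi/2}\bigr)(\delta_{e^{-u}}z) \di\xi .
\]
The decisive feature of this formula is that, upon taking the $L^1(G)$-norm and rescaling the $N$-variable by $\delta_{e^u}$, the Jacobian $e^{Qu}$ together with the factor $e^u$ coming from $\|\vfN_j^{\mathrm r}\vfN_l h^N_{e^{-u}\xi/2}\|_{L^1(N)}\lesssim(e^{-u}\xi)^{-1}$ cancels the modular factor $e^{-(Q+1)u}$ exactly, so that no exponential growth in $u$ survives.

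With the representation in hand, I would carry out the estimates exactly as in Proposition~\ref{prp:gradientestimates}. Applying \eqref{eq:expweight_dec} to the weight $e^{\gamma_t|(z,u)|_\dist}$ and rescaling the $N$-variable (under which $(e^{-u}|z|_N^2)^{\gamma_t}$ turns into $e^{u\gamma_t}$ times the weight $|z|_N^{2\gamma_t}$ on $N$), and invoking the bounds $\||\cdot|_N^{2\gamma}\vfN_j^{\mathrm r}\vfN_l h^N_s\|_{L^1(N)}\lesssim s^{\gamma-1}$ for $\gamma\in[0,\gamma_0]$ — which follow by the homogeneity argument of Proposition~\ref{prp:n_heat_estimates}, since $\vfN_j^{\mathrm r}$ is $\delta$-homogeneous of degree $1$ and $\vfN_j^{\mathrm r}\vfN_l h^N_1$ is Schwartz (alternatively, write $\vfN_j^{\mathrm r}\vfN_l h^N_s=(\vfN_j^{\mathrm r}h^N_{s/2})*(\vfN_l h^N_{s/2})$ on $N$ and distribute the weight over the two factors) — one arrives, after the cancellation of the $u$-exponentials, at
\[
\|e^{\gamma_t|\cdot|_\dist}\,\vfG_l(\vfG_j h_t)^*\|_1
\lesssim \int_\RR\int_0^\infty |\Psi_t(\xi)|\, e^{-\cosh u/\xi}\,\bigl((\cosh u)^{\gamma_t}+\xi^{\gamma_t}\bigr)\,\xi^{-1} \di\xi \di u .
\]
Substituting \eqref{eq:defPsi} for $\Psi_t$ (with $e^{\pi^2/4t}\lesssim 1$ since $t\ge t_0$), interchanging the order of integration, and applying Lemma~\ref{lem:innerintegral} with $\alpha=0$, $\beta=1$, $\delta=\gamma_t$ to the inner integral in $(\xi,u)$, this is bounded by a constant times
\[
t^{-1/2}\int_0^\infty \sinh\theta\,\Bigl|\sin\frac{\pi\theta}{2t}\Bigr|\, e^{-\theta^2/4t}\, e^{(\gamma_t-2)\theta}\,(1+\theta) \di\theta .
\]
Since $\gamma_t\le 1/2$ one has $\sinh\theta\,e^{(\gamma_t-2)\theta}\lesssim e^{-\theta/2}$, so using $\bigl|\sin(\pi\theta/2t)\bigr|\lesssim\theta/t$ and $e^{-\theta^2/4t}\le 1$ the integral is $\lesssim t^{-1}\int_0^\infty e^{-\theta/2}\,\theta(1+\theta)\di\theta\lesssim t^{-1}$, whence the whole expression is $\lesssim t^{-3/2}$, as required.

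The step I expect to be the main obstacle is the derivation of the displayed representation. One has to track carefully how the involution — which brings in the modular factor $e^{-Qu}$ and the inversion, the latter sending $u\mapsto-u$ and $z\mapsto-\delta_{e^{-u}}z$ simultaneously — interacts with the two differentiations, use the symmetry of $h^N_s$ to convert the $N$-derivative at the inverted point into a right-invariant one (this is what makes the two derivatives commute past each other and collapse onto a single copy of $h^N_s$), and verify that the resulting exponential-in-$u$ factors cancel identically; one also needs the weighted $L^1(N)$-estimate for the mixed derivative $\vfN_j^{\mathrm r}\vfN_l h^N_s$, which is not literally covered by Proposition~\ref{prp:n_heat_estimates} but follows by the same reasoning. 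Once these are secured, the remainder is a routine repetition of the computations behind Proposition~\ref{prp:gradientestimates}, with the improved exponent $\gamma_t-2$ in Lemma~\ref{lem:innerintegral} (against $\gamma_t-1$ for a single $N$-derivative) delivering the extra decay.
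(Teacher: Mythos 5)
Your proposal is correct and follows essentially the same route as the paper's proof: the same small-time/large-time split via Proposition~\ref{prp:small_time_gaussian_heat} (note only that your claim $t^{-1}\le\min\{t^{-1},t^{-3/2}\}$ on $(0,t_0]$ fails for $t\in(1,t_0]$, though $t^{-1}\lesssim_{t_0}\min\{t^{-1},t^{-3/2}\}$ there, which suffices), the same reduction to weighted $L^1(N)$ bounds, and the same application of Lemma~\ref{lem:innerintegral} with $\alpha=0$, $\beta=1$ producing the exponent $\gamma_t-2$ and the final bound $t^{-3/2}$. The representation you derive via right-invariant fields is the paper's formula $\vfG_l(\vfG_j h_t)^*(z,u)=\int_0^\infty\Psi_t(\xi)\,e^{-\cosh u/\xi}\,e^u\,\vfN_l(\vfN_j h^N_{e^u\xi/2})^*(z)\di\xi$ in disguise, since $(\vfN_j h^N_s)^*=-\vfN_j^{\mathrm{r}}h^N_s$ and left- and right-invariant fields commute; in particular the auxiliary weighted estimate for $\vfN_j^{\mathrm{r}}\vfN_l h^N_s$ that you flag as the main obstacle is exactly Proposition~\ref{prp:n_heat_estimates} with $|\alpha|=|\beta|=1$ and requires no separate argument.
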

\begin{proof}
Choose $t_0$ sufficiently large so that $\epsilon/t_0^{1/2} \leq 1/2$. For $t \leq t_0$ the desired estimate follows from Proposition \ref{prp:small_time_gaussian_heat}, hence in what follows we assume that $t \geq t_0$.

By \eqref{eq:group_law}, \eqref{eq:n_heat_homogeneity} and \eqref{eq:der_heat} we deduce that
\begin{equation}\label{eq:nder_heat}
\begin{split}
(\vfG_j h_t)^*&(z,u) \\
&= e^{-Qu} \int_0^\infty \Psi_t(\xi) \, \exp\left(-\frac{\cosh u}{\xi}\right) e^{-u} \vfN_j h^N_{e^{-u} \xi /2}(-e^{-uD} z) \di\xi \\
&= \int_0^\infty \Psi_t(\xi) \, \exp\left(-\frac{\cosh u}{\xi}\right) (\vfN_j h^N_{e^{u} \xi /2})^*(z) \di\xi.
\end{split}
\end{equation}
Hence, for all $l=1,\dots,\fdim$,
\begin{equation}\label{eq:nnder_heat}
\vfG_l (\vfG_j h_t)^*(z,u) = \int_0^\infty \Psi_t(\xi) \, \exp\left(-\frac{\cosh u}{\xi}\right) e^u \vfN_l (\vfN_j h^N_{e^{u} \xi /2})^*(z) \di\xi .
\end{equation}
By proceeding as in the proof of Proposition \ref{prp:gradientestimates} and applying Proposition \ref{prp:n_heat_estimates}, if we define $\gamma_t = \epsilon/t^{1/2}$, then we obtain that, for $t \geq t_0$,
\[
\begin{split}
\|e^{\gamma_t |\cdot|_\dist} &\,\vfG_l (\vfG_j h_t)^* \|_1 \\
&\lesssim \int_\RR \int_0^\infty |\Psi_t(\xi)| \, \frac{1}{\xi} ((\cosh u)^{\gamma_t} + \xi^{\gamma_t})  \exp\left(-\frac{\cosh u}{\xi}\right) \di\xi \di u \\
&\lesssim t^{-1/2} \int_0^\infty \sinh\theta \, \left|\sin\frac{\pi\theta}{2t} \right| \,\exp\left(-\frac{\theta^2}{4t}\right) \\
&\quad\times \int_\RR \int_0^\infty \frac{1}{\xi^{2+1}} ((\cosh u)^{\gamma_t} + \xi^{\gamma_t}) \exp\left(-\frac{\cosh \theta+\cosh u}{\xi}\right) \di\xi \di u \di\theta\\ 
&\lesssim  t^{-1/2} \int_0^\infty \frac{\sinh\theta}{e^{\theta}} \, e^{(\gamma_t-1) \theta} \, (1+\theta) \, \frac{\theta}{t} \,   \di \theta \lesssim t^{-3/2} ,
\end{split}
\]
where Lemma \ref{lem:innerintegral} (with $\alpha=0$ and $\beta=1$), Proposition \ref{prp:n_heat_estimates} and the fact that $\gamma_t \leq 1/2$ were used.
\end{proof}

An analogous decay for $t$ large can be obtained for certain third-order derivatives of the heat kernel.

\begin{prp}\label{prp:gradientestimates_mixedmixed}
For all $\epsilon \in\Rnon$, there exists $C_{\epsilon} \in \Rpos$ such that, for all $t \in \Rpos$ and $j=1,\dots,\fdim$ and $k,l=0,\dots,\fdim$ with $(k,l) \neq (0,0)$,
\[
\left\| e^{\epsilon |\cdot|_\dist / t^{1/2}} \, \vfG_l \vfG_k (\vfG_j h_t)^* \right\|_{1} \leq C_{\epsilon} \, t^{-3/2}.
\]
\end{prp}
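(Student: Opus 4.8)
The plan is to follow the scheme of the proofs of Propositions \ref{prp:gradientestimates} and \ref{prp:gradientestimates_mixed}. One fixes a sufficiently large $t_0 \in \Rpos$ (so that $\epsilon/t_0^{1/2} \leq 1/2$ and $t_0 \geq 1$); for $t \leq t_0$ the bound follows at once from Proposition \ref{prp:small_time_gaussian_heat}, applied with $\alpha = (l,k)$ and $\beta = (j)$ so that $|\alpha|+|\beta| = 3$. Hence one may assume $t \geq t_0$ and set $\gamma_t = \epsilon/t^{1/2} \in [0,1/2]$. Starting from the representation \eqref{eq:nder_heat} of $(\vfG_j h_t)^*$, one applies the two vector fields $\vfG_k$ and $\vfG_l$ and estimates the result by means of the weight splitting \eqref{eq:expweight_dec}, the $N$-heat-kernel bounds of Proposition \ref{prp:n_heat_estimates}, the formula \eqref{eq:defPsi}, and Lemmas \ref{lem:innerintegral} and \ref{lem:cancellation}. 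Since $(k,l) \neq (0,0)$, at most one of $k,l$ vanishes, so at most one of the two outer derivatives is $\vfG_0 = \partial_u$; the argument splits accordingly.

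If $k,l \geq 1$, no integration by parts is needed: as $\vfG_k = e^u\vfN_k$ and $\vfG_l = e^u\vfN_l$ act only on the $N$-variable, \eqref{eq:nder_heat} gives
\[
\vfG_l\vfG_k(\vfG_j h_t)^*(z,u) = \int_0^\infty \Psi_t(\xi)\, \exp\left(-\frac{\cosh u}{\xi}\right) e^{2u}\, \vfN_l\vfN_k(\vfN_j h^N_{e^u\xi/2})^*(z) \di\xi,
\]
and one proceeds verbatim as in the proof of Proposition \ref{prp:gradientestimates_mixed}, now with three $N$-derivatives: Proposition \ref{prp:n_heat_estimates} contributes a factor $s^{\gamma_t - 3/2}$ with $s = e^u\xi/2$, so that after combining $e^{2u}$ with $s^{-3/2}$ only $e^{u/2}$ survives in the $u$-prefactor. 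Lemma \ref{lem:innerintegral} (with $\alpha = 1/2$, $\beta = 3/2$, $\delta = \gamma_t$) then bounds the inner double integral by $e^{(\gamma_t-2)\theta}$, whence $\sinh\theta\, e^{(\gamma_t-2)\theta} \leq e^{-\theta/2}$, and using $|\sin(\pi\theta/(2t))| \lesssim \theta/t$ the remaining $\theta$-integral reduces to a constant times $t^{-3/2}\int_0^\infty \theta\, e^{-\theta/2}\di\theta \lesssim t^{-3/2}$.

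If exactly one of $k,l$ equals $0$, then, since $[\vfG_0,\vfG_l] = \vfG_l$ for $l \geq 1$, one has $\vfG_0\vfG_l(\vfG_j h_t)^* = \vfG_l\vfG_0(\vfG_j h_t)^* + \vfG_l(\vfG_j h_t)^*$, and the last term is $O(t^{-3/2})$ for $t \geq t_0 \geq 1$ by Proposition \ref{prp:gradientestimates_mixed}; so it suffices to treat $\vfG_l\vfG_0(\vfG_j h_t)^*$ with $l \geq 1$. Applying $\vfG_0 = \partial_u$ to \eqref{eq:nder_heat} and arguing exactly as in \eqref{eq:heat_X0der} (via $\partial_u[(\vfN_j h^N_{e^u\xi/2})^*] = \xi\partial_\xi[(\vfN_j h^N_{e^u\xi/2})^*]$, an integration by parts in $\xi$, and $\sinh u + \cosh u = e^u$) produces a sum of two terms: one with multiplier $\Psi_t(\xi)$ and an extra factor $e^u/\xi$, and one with multiplier $\partial_\xi[\xi\Psi_t(\xi)]$. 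Applying $\vfG_l = e^u\vfN_l$ turns $(\vfN_j h^N_{e^u\xi/2})^*$ into $e^u\vfN_l(\vfN_j h^N_{e^u\xi/2})^*$ in each. The first term is then handled by the method of the previous case (two $N$-derivatives and the factor $e^{2u}/\xi$; Lemma \ref{lem:innerintegral} with $\alpha = 1$, $\beta = 2$), giving $O(t^{-3/2})$. For the second term one invokes Lemma \ref{lem:cancellation}\ref{en:cancellation_der}, whose right-hand side carries the prefactor $t^{-3/2}$ and a $\theta$-integrand controlled by $\cosh\theta\,(1+\theta)\exp(-\theta^2/(4t) - \cosh\theta/\xi)\,\xi^{-2}$; combining this with Proposition \ref{prp:n_heat_estimates} and Lemma \ref{lem:innerintegral} (with $\alpha = 0$, $\beta = 1$) yields a bound of the shape $t^{-3/2}\int_0^\infty (1+\theta)^2 e^{(\gamma_t-1)\theta}\di\theta \lesssim t^{-3/2}$. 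Collecting the cases finishes the proof.

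The substantive difficulty is the bookkeeping rather than any single estimate: one must carry out the integration by parts cleanly, collect all resulting terms, and verify that in each of them the combined powers of $e^u$ (equivalently $\cosh u$) and of $1/\xi$ fit the hypotheses $\beta \geq \alpha \geq 0$ of Lemma \ref{lem:innerintegral}, with $\delta = \gamma_t \in [0,1/2]$, so that the final $\theta$-integral has an exponentially decaying integrand and produces the uniform rate $t^{-3/2}$. It is precisely here that the restriction $(k,l) \neq (0,0)$ is used: two $\vfG_0$-derivatives would force a second integration by parts and generate a term involving $\partial_\xi[\Psi_t(\xi)]$, for which — in contrast to $\partial_\xi[\xi\Psi_t(\xi)]$, controlled by Lemma \ref{lem:cancellation}\ref{en:cancellation_der} — no suitable estimate is at hand.
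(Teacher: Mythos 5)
Your proposal is correct and follows essentially the same route as the paper: small times via Proposition \ref{prp:small_time_gaussian_heat}, the case $k,l\geq 1$ by differentiating \eqref{eq:nder_heat} and applying Lemma \ref{lem:innerintegral} with $\alpha=1/2$, $\beta=3/2$, the case with one $\vfG_0$ via the same integration by parts in $\xi$ (with Lemma \ref{lem:innerintegral} applied with $\alpha=1,\beta=2$ and $\alpha=0,\beta=1$, and Lemma \ref{lem:cancellation}\ref{en:cancellation_der} for the $\partial_\xi[\xi\Psi_t]$ term), and the commutator identity $[\vfG_0,\vfG_k]=\vfG_k$ to reduce the remaining ordering to Proposition \ref{prp:gradientestimates_mixed} and the case already treated. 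The only quibble is your closing remark: a second $\vfG_0$ would produce $\partial_\xi[\xi\partial_\xi[\xi\Psi_t(\xi)]]$ rather than $\partial_\xi[\Psi_t(\xi)]$, but this side comment does not affect the proof of the stated result.
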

\begin{proof}
Choose $t_0$ sufficiently large so that $\epsilon/t_0^{1/2} \leq 1/2$. For $t \leq t_0$ the desired estimate follows from Proposition \ref{prp:small_time_gaussian_heat}.
Consider now the case $t \geq t_0$.
From \eqref{eq:nder_heat} we deduce that, for all $k,l=1,\dots,\fdim$,
\[
\vfG_l \vfG_k (\vfG_j h_t)^*(z,u) = \int_0^\infty \Psi_t(\xi) \, \exp\left(-\frac{\cosh u}{\xi}\right) e^{2u} \vfN_l \vfN_k (\vfN_j h^N_{e^{u} \xi /2})^*(z) \di\xi .
\]
By proceeding as in the proof of Proposition \ref{prp:gradientestimates_mixed}, if we define $\gamma_t = \epsilon/t^{1/2}$, then we obtain that, for $t \geq t_0$,
\[
\begin{split}
\|e^{\gamma_t |\cdot|_\dist} &\,\vfG_l \vfG_k (\vfG_j h_t)^*\|_1 \\
&\lesssim \int_\RR \int_0^\infty |\Psi_t(\xi)| \, \frac{e^{u/2}}{\xi^{3/2}} ((\cosh u)^{\gamma_t} + \xi^{\gamma_t})  \exp\left(-\frac{\cosh u}{\xi}\right) \di\xi \di u \\
&\lesssim t^{-1/2} \int_0^\infty \sinh\theta \, \left|\sin\frac{\pi\theta}{2t} \right| \,\exp\left(-\frac{\theta^2}{4t}\right) \\
&\quad\times \int_\RR \int_0^\infty \frac{e^{u/2}}{\xi^{2+3/2}} ((\cosh u)^{\gamma_t} + \xi^{\gamma_t}) \exp\left(-\frac{\cosh \theta+\cosh u}{\xi}\right) \di\xi \di u \di\theta\\ 
&\lesssim  t^{-1/2} \int_0^\infty \frac{\sinh\theta}{e^{\theta}} \, e^{(\gamma_t-1) \theta}  \, \frac{\theta}{t} \,   \di \theta \lesssim t^{-3/2}
\end{split}
\]
where Lemma \ref{lem:innerintegral} (with $\alpha=1/2$ and $\beta=3/2$), Proposition \ref{prp:n_heat_estimates} and the fact that $\gamma_t \leq 1/2$ were used. This proves the desired estimate when $k \neq 0 \neq l$.

Consider now the case where $k=0 \neq l$. Starting from \eqref{eq:nder_heat} and proceeding as in the derivation of \eqref{eq:heat_X0der}, one easily obtains that 
\[
\vfG_l \vfG_0 (\vfG_j h_t)^*(z,u) = I_1 + I_2,
\]
where
\begin{align*}
I_1 &= -\int_0^\infty \Psi_t(\xi) \, \frac{e^u}{\xi} \exp\left(-\frac{\cosh u}{\xi}\right) \, e^u \vfN_l (\vfN_j h^N_{e^u \xi/2})^*(z) \di\xi, \\
I_2 &= -\int_0^\infty \partial_\xi [\xi \, \Psi_t(\xi)] \, \exp\left(-\frac{\cosh u}{\xi}\right) \, e^u \vfN_l (\vfN_j h^N_{e^u \xi /2})^*(z) \di\xi.
\end{align*}
The estimate $\|e^{\gamma_t |\cdot|_\dist} \, I_1\|_1 \lesssim t^{-3/2}$ for $t \geq t_0$ is then proved analogously as above (here Proposition \ref{prp:n_heat_estimates} and Lemma \ref{lem:innerintegral} with $\alpha=1$ and $\beta=2$ are used). As for $I_2$, from Lemma \ref{lem:cancellation}\ref{en:cancellation_der} and Proposition \ref{prp:n_heat_estimates} we obtain, for $t \geq t_0$,
\[\begin{split}
\|e^{\gamma_t |\cdot|_\dist}\, &I_2 \|_1 \\
&\lesssim \int_\RR \int_0^\infty  \left|\partial_\xi [\xi \, \Psi_t(\xi)]\right|  \frac{1}{\xi} \,  ((\cosh u)^{\gamma_t} + \xi^{\gamma_t})  \exp\left(-\frac{\cosh u}{\xi}\right) \di\xi \di u\\
&\lesssim t^{-3/2} \int_0^\infty \cosh\theta \, \left|\pi \cos\frac{\pi\theta}{2t} -  \theta \sin\frac{\pi\theta}{2t}\right| \exp\left(-\frac{\theta^2}{4t}\right) \\
&\quad\times \int_\RR \int_0^\infty \frac{1}{\xi^{2+1}} \, ((\cosh u)^{\gamma_t} + \xi^{\gamma_t}) \, \exp\left(-\frac{\cosh \theta + \cosh u}{\xi}\right) \di\xi \di u \di\theta \\
&\lesssim t^{-3/2} \int_0^\infty \frac{\cosh\theta}{e^\theta} e^{(\gamma_t -1)\theta} \, (1 + \theta)^2 \di\theta \lesssim t^{-3/2}
\end{split}\]
where Lemma \ref{lem:innerintegral} (with $\alpha=0$ and $\beta=1$) and the fact that $\gamma_t \leq 1/2$ were used.

As for the case where $l=0\neq k$, note that $[\vfG_0,\vfG_k] = \vfG_k$, hence
\[
\vfG_0 \vfG_k (\vfG_j h_t)^* = \vfG_k (\vfG_j h_t)^* + \vfG_k \vfG_0 (\vfG_j h_t)^*.
\]
The desired estimate then follows from the estimate $\|e^{\gamma_t |\cdot|_\dist} \, \vfG_k \vfG_0 (\vfG_j h_t)^*\|_1 \lesssim t^{-3/2}$ for $t \geq t_0$, which has just been proved, and the estimate $\|e^{\gamma_t |\cdot|_\dist} \, \vfG_k (\vfG_j h_t)^*\|_1 \lesssim t^{-3/2}$ for $t \geq t_0$, which is in Proposition \ref{prp:gradientestimates_mixed}.
\end{proof}

Finally, we obtain some pointwise estimates for second-order derivatives of the heat kernel for large time.
While they do not yield decay in the space variables, these estimates will be nevertheless useful to justify the absolute convergence of the integral defining the ``part at infinity'' of the kernel of the second-order Riesz transforms (see formula \eqref{eq:kinfty} below).

\begin{prp}\label{prp:pointwise_heat}
For all $t_0 \in \Rpos$, $l,j \in \{0,\dots,\fdim\}$, there exists $C_{t_0}\in\Rpos$ such that, for all $t \geq t_0$ and $(z,u) \in G$,
\[
|\vfG_j (\vfG_l h_t)^* (z,u)| \leq C_{t_0} \, t^{-3/2} e^{-Qu/2} \cosh u .
\]
\end{prp}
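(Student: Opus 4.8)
The plan is to run the scheme of the proofs of Propositions~\ref{prp:gradientestimates}, \ref{prp:gradientestimates_mixed} and \ref{prp:gradientestimates_mixedmixed}, replacing the $L^1(N)$-bounds of Proposition~\ref{prp:n_heat_estimates} with $L^\infty(N)$-bounds and keeping careful track of the dependence on $u$. As in those proofs, it is enough to treat $t \geq t_0$, the range $t \leq t_0$ being covered by Proposition~\ref{prp:small_time_gaussian_heat}. Two preliminary facts are needed. First, by the homogeneity identity \eqref{eq:n_heat_homogeneity} and the fact that $h_1^N$ is Schwartz (arguing as in the proof of Proposition~\ref{prp:n_heat_estimates}, with $\|\cdot\|_{L^1(N)}$ replaced by $\|\cdot\|_{L^\infty(N)}$), one has $\| \vfN^\alpha (\vfN^\beta h_s^N)^* \|_{L^\infty(N)} \lesssim_{\alpha,\beta} s^{-(Q+|\alpha|+|\beta|)/2}$ for all $\alpha,\beta \in \Sigma$ and $s \in \Rpos$. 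Second, from \eqref{eq:defPsi} and Lemma~\ref{lem:cancellation}\ref{en:cancellation_der}, bounding $|\sin(\pi\theta/(2t))| \leq \pi\theta/(2t)$ and $e^{\pi^2/(4t)} \leq e^{\pi^2/(4t_0)}$ for $t \geq t_0$, one gets
\[
|\Psi_t(\xi)| \lesssim_{t_0} \frac{t^{-3/2}}{\xi^{2}} \int_0^\infty \theta \sinh\theta \, e^{-\theta^2/(4t) - \cosh\theta/\xi} \di\theta, \qquad |\partial_\xi[\xi\Psi_t(\xi)]| \lesssim_{t_0} \frac{t^{-3/2}}{\xi^{2}} \int_0^\infty (1+\theta)\cosh\theta \, e^{-\theta^2/(4t) - \cosh\theta/\xi} \di\theta.
\]

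Next I would record, in each of the four cases according to whether $l$ and $j$ are zero or positive, an integral representation of $\vfG_j(\vfG_l h_t)^*$. When $l,j \in \{1,\dots,\fdim\}$ this is \eqref{eq:nnder_heat}. In the mixed cases ($l=0\neq j$ or $j=0\neq l$) one argues as in the derivation of \eqref{eq:heat_X0der} and \eqref{eq:nder_heat}: using $\partial_u[h_{e^u\xi/2}^N] = \xi\,\partial_\xi[h_{e^u\xi/2}^N]$, integration by parts in $\xi$, the identity $\sinh u + \cosh u = e^u$, and (when the outer field is $\vfG_0$) the commutation relation $[\vfG_0,\vfG_k]=\vfG_k$, one obtains a finite sum of terms of the form $\int_0^\infty R(\xi)\, m(u,\xi)\, e^{-\cosh u/\xi}\, (\vfN^\gamma h_{e^u\xi/2}^N)^*(z) \di\xi$, where $R \in \{\Psi_t,\,\partial_\xi[\xi\Psi_t]\}$, $m(u,\xi)$ is a monomial in $e^{\pm u}$ and $\xi^{-1}$, and $|\gamma| \leq 1$. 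The remaining case $l=j=0$ is the one not already handled by Propositions~\ref{prp:gradientestimates_mixed} and~\ref{prp:gradientestimates_mixedmixed}; applying $\vfG_0 = \partial_u$ to the formula for $(\vfG_0 h_t)^*$ — which is obtained from \eqref{eq:heat_X0der} via the involution and homogeneity, just as \eqref{eq:nder_heat} was obtained from \eqref{eq:der_heat} — produces, besides the expected terms, a residual factor $\partial_\xi[h_{e^u\xi/2}^N]$. Rather than integrating by parts once more, which would require an estimate for $\partial_\xi^2[\xi\Psi_t]$ not at our disposal, I would use the heat equation $\partial_s h_s^N = -\Delta^N h_s^N = \sum_{i=1}^\fdim \vfN_i^2 h_s^N$ on $N$ to rewrite $\partial_\xi[h_{e^u\xi/2}^N] = \tfrac{e^u}{2}\sum_i \vfN_i^2 h_{e^u\xi/2}^N$; this again leaves only $\Psi_t$ and $\partial_\xi[\xi\Psi_t]$ (now with $N$-derivatives of order $|\gamma|\le 2$), while the $\sinh u + \cosh u = e^u$ cancellation keeps the $u$-dependent coefficients under control.

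Given these representations, the estimate of each term is mechanical. Taking absolute values, bounding the $N$-heat-kernel derivative in $L^\infty(N)$ by a negative power of $s = e^u\xi/2$, and inserting the bounds above for $\Psi_t$ and $\partial_\xi[\xi\Psi_t]$, each term is dominated by $t^{-3/2}\,e^{\kappa u}$ times $\int_0^\infty P(\theta)\,\cosh\theta\, e^{-\theta^2/(4t)}\bigl(\int_0^\infty \xi^{-a}\,e^{-(\cosh\theta + \cosh u)/\xi}\di\xi\bigr)\di\theta$, with $P$ a polynomial, $a-1 \geq 3/2$, and $\kappa \in \{-\tfrac{Q+2}{2}, -\tfrac{Q+1}{2}, -\tfrac{Q}{2}, -\tfrac{Q-1}{2}\}$. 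The inner integral equals $\Gamma(a-1)(\cosh\theta+\cosh u)^{1-a}$, which I would bound by $(\cosh\theta)^{-b}(\cosh u)^{-c}$ with $b+c = a-1$, $b>1$ and $c \in \{0,1\}$ (taking $c=1$ precisely when a spurious factor $\cosh u$, arising from bounding $|\sinh u| \leq \cosh u$, must be absorbed — in which case $a$ is correspondingly larger, so $b = a-1-c > 1$ still holds). The resulting $\theta$-integral $\int_0^\infty P(\theta)\cosh\theta\,(\cosh\theta)^{-b}\,e^{-\theta^2/(4t)}\di\theta$ is then bounded uniformly in $t \geq t_0$ and in $u$, since $\cosh\theta\,(\cosh\theta)^{-b} \lesssim e^{-(b-1)\theta}$ with $b-1>0$. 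Hence each term is $\lesssim_{t_0} t^{-3/2}\,e^{\kappa u}\,(\cosh u)^{-c}$, and since $\cosh u \geq 1$ one checks in each case that $e^{\kappa u}(\cosh u)^{-c} \lesssim e^{-Qu/2}\cosh u$, using $e^{u/2} \lesssim (\cosh u)^{1/2} \leq \cosh u$ for $\kappa=-\tfrac{Q-1}{2}$, and $e^{-u} \lesssim \cosh u$ for $u\le 0$ when $\kappa=-\tfrac{Q+2}{2}$. Summing the finitely many terms gives the claim.

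The main obstacle is the case $l=j=0$: arranging the double application of $\vfG_0$ to the integral formula so that (i) no estimate for $\partial_\xi^2[\xi\Psi_t]$ is required — which is achieved via the $N$-heat equation — and (ii) the coefficient $\sinh u/\xi$, which grows like $\cosh u$ as $u\to-\infty$, is kept under control, either by the cancellation $\sinh u + \cosh u = e^u$ or by an offsetting factor $(\cosh u)^{-1}$ extracted from the $\xi$-integral. Everything else is bookkeeping: for each of the finitely many resulting terms one must identify the exact power of $e^u$ and verify that it is dominated by $e^{-Qu/2}\cosh u$.
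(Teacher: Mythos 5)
Your argument is correct and, in its essentials, is the paper's own: for $t\ge t_0$ one bounds $|\Psi_t(\xi)|$ and $|\partial_\xi[\xi\Psi_t(\xi)]|$ pointwise by $t^{-3/2}\xi^{-2}$ times a $\theta$-integral, using \eqref{eq:defPsi} and Lemma~\ref{lem:cancellation}\ref{en:cancellation_der}, and replaces the $L^1(N)$ estimates of Proposition~\ref{prp:n_heat_estimates} by the sup-norm bound $\|\vfN^\alpha(\vfN^\beta h^N_s)^*\|_{L^\infty(N)}\lesssim s^{-(Q+|\alpha|+|\beta|)/2}$ coming from \eqref{eq:n_heat_homogeneity} and the Schwartz property of $h^N_1$; this is exactly what happens in \eqref{eq:pwest} and the displays that follow it. The one genuine divergence is your treatment of the terms containing $\xi\partial_\xi[h^N_{e^u\xi/2}]$ when $j=l=0$: the paper integrates by parts in $\xi$ a second time, arriving at \eqref{eq:X0X0_dec} with the kernel $\partial_\xi[\xi\partial_\xi[\xi\Psi_t(\xi)]]$ (whose pointwise bound, though not stated as a separate lemma, follows at once by differentiating the formula in Lemma~\ref{lem:cancellation}\ref{en:cancellation_der} under the integral sign, so it is in fact ``at our disposal''), whereas you use the heat equation on $N$ to trade the $\xi$-derivative for two extra horizontal derivatives on $N$, costing only a factor $s^{-1}\sim e^{-u}\xi^{-1}$ in the $L^\infty$ bound. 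Both routes close; your exponent bookkeeping is consistent (in every term carrying a spurious $\cosh u$ one has $a-2\ge 1+Q/2>1$, so $c=1$ is always admissible), and your variant has the merit of using only the stated estimates for $\Psi_t$ and $\partial_\xi[\xi\Psi_t]$. Note, however, that the paper's decomposition \eqref{eq:X0X0_dec} is not gratuitous: it is reused in Case IV of the proof of Theorem~\ref{thm:main}\ref{en:main_second}, where the cancellation of Lemma~\ref{lem:cancellation}\ref{en:cancellation_derint} applied to $\int_1^\infty\partial_\xi[\xi\partial_\xi[\xi\Psi_t(\xi)]]\di t$ is what makes $\int_1^\infty J_3\di t$ integrable, so the second integration by parts must be performed there in any case. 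Two minor points: the commutator $[\vfG_0,\vfG_k]=\vfG_k$ plays no role here (only one outer field is applied to $(\vfG_l h_t)^*$); and the case $j=0\ne l$ is disposed of in one line in the paper via $\vfG_0(\vfG_l h_t)^*=(\vfG_l(\vfG_0 h_t)^*)^*$ together with $|f^*(z,u)|=e^{-Qu}|f(-e^{-uD}z,-u)|$, which converts the bound for $(l,0)$ into the one for $(0,l)$ without any further computation.
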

\begin{proof}
Let us preliminarily observe that from \eqref{eq:defPsi} and Lemma \ref{lem:cancellation}\ref{en:cancellation_der} one readily obtains that, for all $t \geq t_0$ and $\xi > 0$,
\begin{equation}\label{eq:est_Psi_der}
\begin{aligned}
|\Psi_t(\xi)| &\lesssim t^{-3/2} \int_0^\infty \theta \, \frac{\sinh \theta}{\xi^2} \exp\left(-\frac{\cosh \theta}{\xi} \right) \di\theta,\\
|\partial_\xi [ \xi \Psi_t(\xi)] | &\lesssim t^{-3/2} \int_0^\infty (1+\theta) \, \frac{\cosh \theta}{\xi^2} \exp\left(-\frac{\cosh \theta}{\xi} \right) \di\theta,\\
|\partial_\xi [\xi \partial_\xi [ \xi \Psi_t(\xi)]] | &\lesssim t^{-3/2} \int_0^\infty (1+\theta) \left[ \frac{\cosh \theta}{\xi^2} + \frac{\cosh^2 \theta}{\xi^3} \right] \exp\left(-\frac{\cosh \theta}{\xi} \right) \di\theta,
\end{aligned}
\end{equation}
where the implicit constants may depend on $t_0$.

Suppose first that $j,l \in \{1,\dots,q\}$. By \eqref{eq:nnder_heat}, \eqref{eq:n_heat_homogeneity} and \eqref{eq:est_Psi_der}, we deduce that
\begin{equation}\label{eq:pwest}
\begin{split}
|\vfG_j &(\vfG_l h_t)^* (z,u)| 
\leq \int_0^\infty |\Psi_t(\xi)| \, \exp\left(-\frac{\cosh u}{\xi}\right) e^u |\vfN_j (\vfN_l h^N_{e^{u} \xi /2})^*(z)| \di\xi \\
&\lesssim e^{-Qu/2} \int_0^\infty |\Psi_t(\xi)| \, \exp\left(-\frac{1}{\xi}\right)  \xi^{-Q/2-1} \di\xi \\
&\lesssim t^{-3/2} e^{-Qu/2} \int_0^\infty \theta \sinh\theta \int_0^\infty \exp\left(-\frac{\cosh \theta+1}{\xi}\right) \xi^{-Q/2-3} \di\xi \di\theta
\end{split}
\end{equation}
and the integral in $\theta$ and $\xi$ is immediately seen to be convergent.

Assume instead that $j \in \{1,\dots,q\}$ and $l = 0$. From \eqref{eq:heat_X0der}, \eqref{eq:group_law} and \eqref{eq:n_heat_homogeneity} it follows that
\begin{equation}\label{eq:heat_X0der_star}
\begin{split}
(\vfG_0 h_t)^*(z,u) 
&= -\int_0^\infty \Psi_t(\xi) \, \frac{e^{-u}}{\xi} \, \exp\left(-\frac{\cosh u}{\xi}\right) h^N_{e^u \xi /2}(z) \di \xi \\
&\qquad -\int_0^\infty \partial_\xi[\xi \, \Psi_t(\xi)] \, \exp\left(-\frac{\cosh u}{\xi}\right) \, h^N_{e^u \xi /2}(z) \di \xi,
\end{split}
\end{equation}
whence
\begin{equation}\label{eq:XjX0_dec}
\begin{split}
\vfG_j (\vfG_0 h_t)^*(z,u) 
&= -\int_0^\infty \Psi_t(\xi) \, \frac{1}{\xi} \, \exp\left(-\frac{\cosh u}{\xi}\right) \vfN_j h^N_{e^u \xi /2}(z) \di \xi \\
&\qquad -\int_0^\infty \partial_\xi[\xi \, \Psi_t(\xi)] \, \exp\left(-\frac{\cosh u}{\xi}\right) \, e^u \vfN_j h^N_{e^u \xi /2}(z) \di \xi \\
&= I_1 + I_2.
\end{split}
\end{equation}
Analogously as in \eqref{eq:pwest}, one can see that
\begin{equation}\begin{split}
|I_1| &\leq \int_0^\infty |\Psi_t(\xi)| \, \frac{1}{\xi} \, \exp\left(-\frac{\cosh u}{\xi}\right) |\vfN_j h^N_{e^u \xi /2}(z)| \di \xi \\
&\lesssim t^{-3/2} e^{-u(Q+1)/2}.
\end{split}\end{equation}
Moreover, by \eqref{eq:n_heat_homogeneity} and \eqref{eq:est_Psi_der},
\begin{equation}\label{eq:I2_absint}
\begin{split}
|I_2| &\leq \int_0^\infty |\partial_\xi[\xi \, \Psi_t(\xi)]| \, \exp\left(-\frac{\cosh u}{\xi}\right) \, e^u |\vfN_j h^N_{e^u \xi /2}(z)| \di \xi \\
&\lesssim t^{-3/2} e^{-u(Q-1)/2} \int_0^\infty (1+\theta) \cosh \theta \int_0^\infty \exp\left(-\frac{\cosh \theta +1}{\xi}\right) \, \xi^{-(Q+5)/2}  \di\xi \di\theta \\
&\lesssim t^{-3/2} e^{-u(Q-1)/2},
\end{split}
\end{equation}
since the integral in $\theta$ and $\xi$ is convergent. In conclusion
\[
|\vfG_j (\vfG_0 h_t)^*(z,u)| \lesssim t^{-3/2} e^{-Qu/2} \cosh (u/2).
\]

Note that, if $l \in \{1,\dots,q\}$ and $j = 0$, then $\vfG_0 (\vfG_l h_t)^* = (\vfG_l (\vfG_0 h_t)^*)^*$, hence again
\[
|\vfG_0 (\vfG_l h_t)^*(z,u)| \lesssim t^{-3/2} e^{-Qu/2} \cosh (u/2).
\]

Finally, assume that $j=l=0$. From \eqref{eq:heat_X0der_star}, by proceeding analogously as in the derivation of \eqref{eq:heat_X0der},
\begin{equation}\label{eq:X0X0_dec}
\begin{split}
\vfG_0 (\vfG_0 h_t)^*&(z,u) \\
&= \int_0^\infty \Psi_t(\xi) \, \frac{e^{-u}}{\xi} \, \exp\left(-\frac{\cosh u}{\xi}\right) h^N_{e^u \xi /2}(z) \di \xi \\
&\qquad +\int_0^\infty \Psi_t(\xi) \, \frac{e^{-u} \sinh u}{\xi^2} \, \exp\left(-\frac{\cosh u}{\xi}\right) h^N_{e^u \xi /2}(z) \di \xi \\
&\qquad -\int_0^\infty \Psi_t(\xi) \, \frac{e^{-u}}{\xi} \, \exp\left(-\frac{\cosh u}{\xi}\right) \xi \partial_\xi [h^N_{e^u \xi /2}(z)] \di \xi \\
&\qquad +\int_0^\infty \partial_\xi [\xi \, \Psi_t(\xi)] \, \frac{\sinh u}{\xi} \exp\left(-\frac{\cosh u}{\xi}\right) \, h^N_{e^u \xi /2}(z) \di \xi \\
&\qquad -\int_0^\infty \partial_\xi [\xi \, \Psi_t(\xi)] \, \exp\left(-\frac{\cosh u}{\xi}\right) \, \xi \partial_\xi [h^N_{e^u \xi /2}(z)] \di \xi \\
&=\int_0^\infty \Psi_t(\xi) \, \frac{1}{\xi^2} \, \exp\left(-\frac{\cosh u}{\xi}\right) h^N_{e^u \xi /2}(z) \di \xi \\
&\qquad +2 \int_0^\infty \partial_\xi [\xi \, \Psi_t(\xi)] \, \frac{\cosh u}{\xi} \exp\left(-\frac{\cosh u}{\xi}\right) \, h^N_{e^u \xi /2}(z) \di \xi \\
&\qquad +\int_0^\infty \partial_\xi [\xi \partial_\xi [\xi \, \Psi_t(\xi)] ] \, \exp\left(-\frac{\cosh u}{\xi}\right) \, h^N_{e^u \xi /2}(z) \di \xi \\
&= J_1 + 2 J_2 + J_3.
\end{split}
\end{equation}
Similarly as before, one then sees that
\begin{equation}\label{eq:J123_absint}
\begin{aligned}
|J_1| &\leq \int_0^\infty |\Psi_t(\xi)| \, \frac{1}{\xi^2} \, \exp\left(-\frac{\cosh u}{\xi}\right) |h^N_{e^u \xi /2}(z)| \di \xi \lesssim t^{-3/2} e^{-Qu/2}, \\
|J_2| &\leq \int_0^\infty |\partial_\xi [\xi \, \Psi_t(\xi)]| \, \frac{\cosh u}{\xi} \exp\left(-\frac{\cosh u}{\xi}\right) \, |h^N_{e^u \xi /2}(z)| \di \xi 
\lesssim t^{-3/2} \frac{\cosh u}{e^{Qu/2}} , \\
|J_3| &\leq \int_0^\infty |\partial_\xi [\xi \partial_\xi [\xi \, \Psi_t(\xi)] ]| \, \exp\left(-\frac{\cosh u}{\xi}\right) \, |h^N_{e^u \xi /2}(z)| \di \xi 
\lesssim t^{-3/2} e^{-Qu/2},
\end{aligned}
\end{equation}
so
\[
|\vfG_0 (\vfG_0 h_t)^*(z,u)| \lesssim t^{-3/2} e^{-Qu/2} \cosh u
\]
and we are done.
\end{proof}

\section{Riesz transforms}\label{s:riesz}

\subsection{Preliminaries on the definition of Riesz transforms}\label{ss:riesz_def}

Let $Y$ be a horizontal vector field. We consider $Y$ as a densely defined operator on $L^2(G)$, with maximal domain (i.e., the set of the $f \in L^2(G)$ whose distributional derivative $Y f$ is in $L^2(G)$). Then $i Y$ is self-adjoint and $C^\infty_c(G)$ is dense in the domain of $Y$ with respect to the graph norm (see, e.g., \cite{NS1959}). Similarly, the sub-Laplacian $\Delta$ is self-adjoint, and moreover it has trivial $L^2$ kernel, so via the spectral theorem we can define its fractional powers $\Delta^\alpha$ ($\alpha \in \RR$) as self-adjoint operators on $L^2(G)$.

\begin{prp}\label{prp:def_riesz}
Let $Y$ and $Z$ be horizontal left-invariant vector fields. 
\begin{enumerate}[label=(\roman*)]
\item\label{en:rieszdef_dom} The domain of $\Delta^{1/2}$ is contained in the domain of $Y$.
\item\label{en:rieszdef_first} The range of $\Delta^{-1/2}$ is contained in the domain of $Y$. The composition $Y \Delta^{-1/2}$, defined on the domain of $\Delta^{-1/2}$, extends to a bounded operator $\overline{Y \Delta^{-1/2}}$ on $L^2(G)$ with norm at most $|Y|_g$.
\item\label{en:rieszdef_adjfirst} The range of $Y$ is contained in the domain of $\Delta^{-1/2}$. The composition $\Delta^{-1/2} Y$, defined on the domain of $Y$, extends to a bounded operator $\overline {\Delta^{-1/2} Y}$ on $L^2(G)$ with norm at most $|Y|_g$.
\item\label{en:rieszdef_adjadj} $(\Delta^{-1/2} Y)^* = -\overline{Y \Delta^{-1/2}}$ and $(Y \Delta^{-1/2})^* = -\overline{\Delta^{-1/2} Y}$.
\item\label{en:rieszdef_firstappr} If $(F_n)_{n\in \NN}$ is an increasing sequence of nonnegative bounded Borel functions on $(0,\infty)$ converging pointwise to $\lambda \mapsto \lambda^{-1/2}$, then the range of $F_n(\Delta)$ is contained in the domain of $Y$, the operators $Y F_n(\Delta)$ and $\overline{F_n(\Delta) Y}$ are bounded on $L^2(G)$ with norm at most $|Y|_g$, and
\[
Y F_n(\Delta) \to \overline{Y \Delta^{-1/2}}, \qquad \overline{F_n(\Delta) Y} \to \overline{\Delta^{-1/2} Y}
\]
in the strong operator topology.
\item\label{en:rieszdef_secondappr} If $(G_n)_{n\in \NN}$ is an increasing sequence of nonnegative bounded Borel functions on $(0,\infty)$ converging pointwise to $\lambda \mapsto \lambda^{-1}$, then the range of $G_n(\Delta) Z$ is contained in the domain of $Y$, the operators $\overline{Y G_n(\Delta) Z}$ are bounded on $L^2(G)$ with norm at most $|Y|_g |Z|_g$, and
\[
\overline{Y G_n(\Delta) Z} \to \overline{Y \Delta^{-1/2}} \, \overline{\Delta^{-1/2} Z}
\]
in the strong operator topology.
\end{enumerate}
\end{prp}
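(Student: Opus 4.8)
My plan is to establish the quantitative assertions \ref{en:rieszdef_dom}, \ref{en:rieszdef_first}, \ref{en:rieszdef_adjfirst} first, and then to deduce \ref{en:rieszdef_adjadj}--\ref{en:rieszdef_secondappr} by formal manipulations with adjoints, operator cores, and the spectral theorem for $\Delta$.

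For \ref{en:rieszdef_dom} I would start from the Dirichlet form identity \eqref{eq:dirichlet}: with $f=g\in C_c^\infty(G)$ it gives $\sum_{j=0}^{\fdim}\|\vfG_j f\|_2^2=\langle\Delta f,f\rangle=\|\Delta^{1/2}f\|_2^2$, and writing the horizontal left-invariant field $Y$ as $\sum_j c_j\vfG_j$ with $\sum_j c_j^2=|Y|_g^2$, the Cauchy--Schwarz inequality yields $\|Yf\|_2\le|Y|_g\,\|\Delta^{1/2}f\|_2$ for $f\in C_c^\infty(G)$. This inequality is then propagated to the whole domain of $\Delta^{1/2}$ using that $Y$ is closed: first to the domain of $\Delta$, for which $C_c^\infty(G)$ is a core (if $f_k\to f$ with $\Delta f_k\to\Delta f$ in $C_c^\infty(G)$, then $\|\Delta^{1/2}(f_k-f_j)\|_2^2=\langle\Delta(f_k-f_j),f_k-f_j\rangle\to0$, so $Yf_k$ is Cauchy and $f$ is in the domain of $Y$ with the same bound), and then to the domain of $\Delta^{1/2}$ via the spectral truncations $f_k=E_{[0,k]}(\Delta)f$. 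Part \ref{en:rieszdef_first} is then immediate, since for $g$ in the dense domain of $\Delta^{-1/2}$ the vector $f:=\Delta^{-1/2}g$ satisfies $\Delta^{1/2}f=g$, so $\|Y\Delta^{-1/2}g\|_2\le|Y|_g\|g\|_2$ and $Y\Delta^{-1/2}$ extends boundedly with norm at most $|Y|_g$.

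The main obstacle is \ref{en:rieszdef_adjfirst}: boundedness of $\Delta^{-1/2}Y$ is not formally symmetric to that of $Y\Delta^{-1/2}$ and does not follow from the Dirichlet form, so I would obtain it by duality, exploiting that $Y^*=-Y$ (as $iY$ is self-adjoint). Fix $f$ in the domain of $Y$. For every $g$ in the domain of $\Delta^{-1/2}$ the vector $\psi:=\Delta^{-1/2}g$ lies in the domain of $\Delta^{1/2}$, hence of $Y$ by \ref{en:rieszdef_dom}, so using $Y^*=-Y$ and the boundedness of $\overline{Y\Delta^{-1/2}}$,
\[
\bigl|\langle Yf,\psi\rangle\bigr|=\bigl|\langle f,Y\Delta^{-1/2}g\rangle\bigr|\le|Y|_g\,\|f\|_2\,\|g\|_2=|Y|_g\,\|f\|_2\,\|\Delta^{1/2}\psi\|_2 .
\]
As $g$ runs over the domain of $\Delta^{-1/2}$, which equals $\operatorname{range}(\Delta^{1/2})$, the vector $\psi$ runs over the dense set $\operatorname{dom}(\Delta^{1/2})$, so $\psi\mapsto\langle Yf,\psi\rangle$ is bounded relative to $\|\Delta^{1/2}\psi\|_2$; self-adjointness of $\Delta^{1/2}$ then forces $Yf\in\operatorname{range}(\Delta^{1/2})=\operatorname{dom}(\Delta^{-1/2})$ with $\|\Delta^{-1/2}Yf\|_2\le|Y|_g\|f\|_2$. (Equivalently one may test $Yf$ against the bounded functions $\lambda^{-1/2}\mathbf{1}_{(1/N,N]}(\lambda)$ of $\Delta$ and let $N\to\infty$ by monotone convergence.) Since $C_c^\infty(G)$ is a core for $Y$, $\Delta^{-1/2}Y$ extends boundedly with norm at most $|Y|_g$, giving \ref{en:rieszdef_adjfirst}. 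Part \ref{en:rieszdef_adjadj} then follows: the identity $\langle\overline{Y\Delta^{-1/2}}\,g,f\rangle=-\langle g,\Delta^{-1/2}Yf\rangle$ for $g\in\operatorname{dom}(\Delta^{-1/2})$, $f\in\operatorname{dom}(Y)$ extends to all $g\in L^2$ by density and shows $-\overline{\Delta^{-1/2}Y}\subseteq(\overline{Y\Delta^{-1/2}})^*$; both sides are everywhere-defined bounded operators, hence equal, and passing to adjoints yields both identities in \ref{en:rieszdef_adjadj}.

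For \ref{en:rieszdef_firstappr} and \ref{en:rieszdef_secondappr} I would combine the above with routine spectral calculus. From $0\le F_n(\lambda)\le\lambda^{-1/2}$ we get $\|\lambda^{1/2}F_n\|_\infty\le1$, so $F_n(\Delta)$ maps into $\operatorname{dom}(\Delta^{1/2})$ and, by \ref{en:rieszdef_dom}, $\|YF_n(\Delta)\|\le|Y|_g$; from $F_n(\lambda)^2\le\lambda^{-1}$ and \ref{en:rieszdef_adjfirst} we get $\|F_n(\Delta)Yf\|_2\le\|\Delta^{-1/2}Yf\|_2\le|Y|_g\|f\|_2$ on $\operatorname{dom}(Y)$, so $\overline{F_n(\Delta)Y}$ is bounded of norm at most $|Y|_g$. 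The strong convergences come from the contraction bound $\|Y(F_n-F_m)(\Delta)f\|_2\le|Y|_g\|(\lambda^{1/2}F_n-\lambda^{1/2}F_m)(\Delta)f\|_2\to0$ (since $\lambda^{1/2}F_n(\lambda)\to1$ boundedly and $\Delta$ has trivial kernel), together with identification of the limit on the dense domain of $\Delta^{-1/2}$, where $F_n(\Delta)f\to\Delta^{-1/2}f$ by dominated convergence and $Y$ is closed; for $\overline{F_n(\Delta)Y}$ one simply has $\overline{F_n(\Delta)Y}f=F_n(\Delta)Yf\to\Delta^{-1/2}Yf=\overline{\Delta^{-1/2}Y}f$ on $\operatorname{dom}(Y)$, extended to $L^2$ by uniform boundedness. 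Finally, for \ref{en:rieszdef_secondappr} the key extra observation is that $Zf\in\operatorname{dom}(\Delta^{-1/2})$ for $f\in\operatorname{dom}(Z)$ by \ref{en:rieszdef_adjfirst}, so $\lambda G_n(\lambda)^2\le\lambda^{-1}$ gives $\|\Delta^{1/2}G_n(\Delta)Zf\|_2\le\|\Delta^{-1/2}Zf\|_2\le|Z|_g\|f\|_2$, whence $G_n(\Delta)Zf\in\operatorname{dom}(\Delta^{1/2})\subseteq\operatorname{dom}(Y)$ and $\|YG_n(\Delta)Zf\|_2\le|Y|_g|Z|_g\|f\|_2$; although $G_n(\Delta)Zf$ itself need not converge, the vectors $w_n:=\Delta^{1/2}G_n(\Delta)Zf$ converge to $\Delta^{-1/2}Zf$ and satisfy $w_n\in\operatorname{dom}(\Delta^{-1/2})$ with $\Delta^{-1/2}w_n=G_n(\Delta)Zf$, so $YG_n(\Delta)Zf=\overline{Y\Delta^{-1/2}}\,w_n\to\overline{Y\Delta^{-1/2}}\,\overline{\Delta^{-1/2}Z}\,f$, and uniform boundedness again upgrades this to strong convergence. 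Throughout, the only non-formal step is the duality argument for \ref{en:rieszdef_adjfirst}; the rest is careful bookkeeping of operator domains, cores and closures.
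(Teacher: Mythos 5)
Your proposal is correct, and while parts \ref{en:rieszdef_dom}, \ref{en:rieszdef_first}, \ref{en:rieszdef_adjadj} and the first half of \ref{en:rieszdef_firstappr} follow essentially the paper's own route (Dirichlet form plus the core property of $C^\infty_c(G)$, then spectral calculus with the contraction bound $\lambda^{1/2}F_n(\lambda)\le 1$), you diverge at the crux, part \ref{en:rieszdef_adjfirst}. The paper gets it from the truncations: $(F_n(\Delta)Y)^*=-YF_n(\Delta)$ makes each $F_n(\Delta)Y$ bounded with norm at most $|Y|_g$, and the monotone-convergence identity $\|\Delta^{-1/2}f\|_2=\sup_n\|F_n(\Delta)f\|_2$ then forces $\operatorname{range}(Y)\subseteq\operatorname{dom}(\Delta^{-1/2})$; so in the paper \ref{en:rieszdef_adjfirst} is logically downstream of the approximation machinery of \ref{en:rieszdef_firstappr}. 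You instead prove \ref{en:rieszdef_adjfirst} by a self-contained duality argument, bounding $\psi\mapsto\langle Yf,\psi\rangle$ by $|Y|_g\|f\|_2\,\|\Delta^{1/2}\psi\|_2$ on $\operatorname{dom}(\Delta^{1/2})=\operatorname{range}(\Delta^{-1/2})$ and invoking self-adjointness of $\Delta^{1/2}$ to conclude $Yf\in\operatorname{range}(\Delta^{1/2})$; this is correct (your parenthetical remark shows you also see the paper's truncation route) and has the merit of decoupling \ref{en:rieszdef_adjfirst} from \ref{en:rieszdef_firstappr}, at the cost of a slightly more delicate functional-analytic step. In \ref{en:rieszdef_secondappr} you also differ: the paper factors $G_n(\Delta)Z=G_n(\Delta)^{1/2}\,G_n(\Delta)^{1/2}Z$ and composes the two strong limits supplied by \ref{en:rieszdef_firstappr} applied to $\sqrt{G_n}$, whereas you work directly with $w_n=\Delta^{1/2}G_n(\Delta)Zf$ and the identity $G_n(\Delta)Zf=\Delta^{-1/2}w_n$; both are sound, the paper's being shorter and yours more explicit about which vectors actually converge. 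No gaps; all domain inclusions you use ($\operatorname{dom}(\Delta^{-1/2})=\operatorname{range}(\Delta^{1/2})$, $Zf\in\operatorname{dom}(\Delta^{-1/2})$, $w_n\in\operatorname{dom}(\Delta^{-1/2})$) check out.
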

\begin{proof}
Part \ref{en:rieszdef_dom} is a consequence of the fact that
\[
\|Y f\|_2 \leq |Y|_g \, \| |\nabla f|_g \|_2, \qquad \| |\nabla f|_g \|_2^2 = \langle \Delta f, f \rangle = \| \Delta^{1/2} f \|_2^2
\]
for all $f \in C^{\infty}_c(G)$, and the density of $C^\infty_c(G)$ in the domains of $Y$ and $\Delta^{1/2}$ for the respective graph norms.
From this part \ref{en:rieszdef_first} follows immediately. Similarly, if the sequence $(F_n)_{n \in \NN}$ is as in part \ref{en:rieszdef_firstappr} (note that such sequences do exist), then the range of $F_n(\Delta)$ is contained in the domain of $Y$ and the composition $Y F_n(\Delta)$ is bounded with norm at most $|Y|_g$. Note now that, for all $f$ in the domain of $\Delta^{-1/2}$,
\[
\|Y F_n(\Delta) f - Y \Delta^{-1/2} f\|_2 \leq |Y|_g \| \Delta^{1/2} (F_n(\Delta) - \Delta^{-1/2}) f\|_2
\]
and the right-hand side tends to $0$ as $n \to \infty$ by the properties of the Borel functional calculus; this and the uniform boundedness of the $Y F_n(\Delta)$ immediately imply that $Y F_n(\Delta) \to \overline{Y \Delta^{-1/2}}$ in the strong operator topology.

Note now that $F_n(\Delta) Y$ is densely defined and $(F_n(\Delta) Y)^* = - Y F_n(\Delta)$, whence $F_n(\Delta) Y$ is also bounded with norm $|Y|_g$. In addition $\|\Delta^{-1/2} f\|_2 = \sup_n \|F_n(\Delta) f\|_2$ for all $f \in L^2(G)$ (where $\|\Delta^{-1/2} f\|_2 = \infty$ if $f$ is not in the domain of $\Delta^{-1/2}$), so the uniform boundedness of the operators $F_n(\Delta) Y$ implies that the range of $Y$ is contained in the domain of $\Delta^{-1/2}$, and part \ref{en:rieszdef_adjfirst} follows.

Since both $\Delta^{-1/2} Y$ and $Y \Delta^{-1/2}$ are densely defined and bounded, part \ref{en:rieszdef_adjadj} follows immediately. Moreover, by functional calculus, $F_n(\Delta) Y f \to \Delta^{-1/2} Y f$ in $L^2$ for all $f$ in the domain of $Y$, and this, together with the uniform boundedness of the $F_n(\Delta) Y$, implies that $\overline{F_n(\Delta) Y} \to \overline{\Delta^{-1/2} Y}$ in the strong operator topology; this proves part \ref{en:rieszdef_firstappr}.

Finally, if $(G_n)_{n \in \NN}$ is as in part \ref{en:rieszdef_secondappr}, then the sequence $(\sqrt{G_n})_{n \in \NN}$ satisfies the assumptions of part \ref{en:rieszdef_firstappr}, so the range of $G_n(\Delta)^{1/2}$ is contained in the domain of $Y$, and moreover $Y G_n(\Delta)^{1/2} \to \overline{Y \Delta^{-1/2}}$ and $\overline{G_n(\Delta)^{1/2} Z} \to \overline{\Delta^{-1/2} Z}$ in the strong operator topology. From this it follows that the range of $G_n(\Delta) Z = G_n(\Delta)^{1/2} G_n(\Delta)^{1/2} Z$ is contained in the domain of $Y$; moreover, for all $f$ in the domain of $Z$,
\[
Y G_n(\Delta) Z f = Y G_n(\Delta)^{1/2} G_n(\Delta)^{1/2} Z f \to \overline{Y \Delta^{-1/2}} \, \overline{\Delta^{-1/2} Z} f,
\]
and part \ref{en:rieszdef_secondappr} follows.
\end{proof}

We can now define the first-order Riesz transforms we are interested in as the $L^2$-bounded operators $R_Y = \overline{Y \Delta^{-1/2}}$, for all horizontal left-invariant vector fields $Y$. Similarly we define the second-order Riesz transforms $R_{YZ} = -R_Y R_Z^* = \overline{Y \Delta^{-1/2}} \, \overline{\Delta^{-1/2} Z}$ for all horizontal left-invariant vector fields $Y,Z$. By a slight abuse of notation, we write $Y \Delta^{-1/2}$ and $Y \Delta^{-1} Z$ instead of $R_Y$ and $R_{YZ}$.

\subsection{Proof of Theorem \ref{thm:main}}

We are finally able to prove our main result.

\begin{proof}[Proof of Theorem \ref{thm:main}\ref{en:main_first}]
Let $Y$ be any horizontal left-invariant vector field. By Proposition \ref{prp:def_riesz}, $Y \Delta^{-1/2}$ is $L^2$-bounded and
\[
\Gamma(1/2) \, Y \Delta^{-1/2} 
 = \sum_{n\in\ZZ} \int_{2^n}^{2^{n+1}}t^{-1/2} Y e^{-t\Delta} \di t = \sum_{n\in \ZZ} T_n,
\]
where the series converges in the strong $L^2$ operator topology.

Let $k_n$ denote the convolution kernel of the operator $T_n$, which is given by
\[
k_n = \int_{2^n}^{2^{n+1}}t^{-1/2} Y h_t\di t.
\]
For every $n\in\ZZ$,
\[\begin{split}
\nabla_H k_n^* &= \int_{2^n}^{2^{n+1}}t^{-1/2} \nabla_H (Y h_t)^* \di t  \\
&= \int_{2^n}^{2^{n+1}} t^{-1/2} \nabla_H (Y (h_{t/2} * h_{t/2}))^* \di t \\
&= \int_{2^n}^{2^{n+1}} t^{-1/2} (Y h_{t/2})^* * \nabla_H h_{t/2} \di t,
\end{split}\]
where the fact that $h_t = h_{t/2} * h_{t/2}$ and $h_{t/2}^* = h_{t/2}$ was used. Hence
\[
\begin{split}
\| |\nabla_H k_n^*|_g \|_1
& \leq \int_{2^n}^{2^{n+1}} t^{-1/2} \,  \| |\nabla_H h_{t/2}|_g \|_1 \,\| Y h_{t/2}\|_1 \di t  \\
&\lesssim  \int_{2^n}^{2^{n+1}} t^{-3/2} \di t \lesssim 2^{-n/2},
\end{split}
\]
where we have applied Proposition \ref{prp:gradientestimates}.
Moreover, for every $n \in \ZZ$,
\[\begin{split}
\int_G |k_n(x ) |\, \big(1+2^{-n/2} |x|_\dist \big) \di\mu(x)
&\leq \int_{2^n}^{2^{n+1}} t^{-1/2} \|Y h_t  \,  \big(1+2^{-n/2}  |\cdot|_{\dist}  \big) \|_1    \di t \\
&\lesssim \int_{2^n}^{2^{n+1}} t^{-1/2}  \|Y h_t   \exp( t^{-1/2}   |\cdot|_{ \dist}    ) \|_1      \di t \\
&\lesssim  \int_{2^n}^{2^{n+1}} t^{-1}  \,    \di t\lesssim 1 .
\end{split}\]
by Proposition \ref{prp:gradientestimates}. The required boundedness properties of $Y \Delta^{-1/2}$ then follow from Theorem \ref{thm:Teolim} and Remark \ref{rem:Teolim_grad}.
\end{proof}

\begin{proof}[Proof of Theorem \ref{thm:main}\ref{en:main_second}]
Notice that by linearity it suffices to prove the result when $Y=\vfG_j$ and $Z=\vfG_l$, for every $j,l\in\{0,\dots,q\}$.

By Proposition \ref{prp:def_riesz}, the operator $\vfG_j \Delta^{-1} \vfG_l$ is $L^2$-bounded and its adjoint is $\vfG_l \Delta^{-1} \vfG_j$. Hence it is enough to prove that $\vfG_j \Delta^{-1} \vfG_l$ is of weak type $(1,1)$, bounded on $L^p(G)$ for $p \in (1,2]$, and bounded from $H^1(G)$ to $L^1(G)$. Moreover, again by Proposition \ref{prp:def_riesz},
\[
-\vfG_j \Delta^{-1} \vfG_l 
= \int_1^\infty \vfG_j (\vfG_l e^{-t\Delta})^* \di t + \sum_{n < 0} \int_{2^{n}}^{2^{n+1}} \vfG_j (\vfG_l e^{-t\Delta})^* \di t
=T^{(\infty)} + \sum_{n < 0} T_n,
\]
where $\int_1^\infty = \lim_{R \to \infty} \int_1^R$ and both this limit and the convergence of the series are meant in the strong $L^2$ operator topology.

Let $k_n$ denote the convolution kernel of the operator $T_n$, which is given by
\[
k_n= \int_{2^{n}}^{2^{n+1}} \vfG_j (\vfG_l h_t)^* \di t = \int_{2^{n}}^{2^{n+1}} (\vfG_l h_{t/2})^* * \vfG_j h_{t/2} \di t,
\]
and note that
\[
\nabla_H k_n^* = \int_{2^{n}}^{2^{n+1}} (\vfG_j h_{t/2})^* * \nabla_H \vfG_l h_{t/2} \di t.
\]
We then have 
\[
\| |\nabla_H k_n^*|_g \|_1 \leq \int_{2^{n}}^{2^{n+1}} \| \vfG_j h_{t/2} \|_1  \| |\nabla_H \vfG_l h_{t/2}|_g \|_1 \di t \lesssim \int_{2^{n}}^{2^{n+1}} t^{-3/2} \di t \lesssim 2^{-n/2},
\]
by Proposition \ref{prp:small_time_gaussian_heat} (note that $2^{n+1} \leq 1$),
while
\[\begin{split}
&\int_G |k_n(x)| \,(1+2^{-n/2}|x|_\dist) \di\mu(x) \\
&\leq \int_{2^{n}}^{2^{n+1}} \| \vfG_j h_{t/2} \,(1+2^{-n/2}|\cdot|_\dist) \|_1 \, \| \vfG_l h_{t/2} \,(1+2^{-n/2}|\cdot|_\dist) \|_1 \di t \\
&\lesssim \int_{2^{n}}^{2^{n+1}} \| \vfG_j h_{t/2} \,\exp(t^{-1/2}|\cdot|_\dist) \|_1 \, \| \vfG_l h_{t/2} \,\exp(t^{-1/2}|\cdot|_\dist) \|_1 \di t \\
&\lesssim \int_{2^{n}}^{2^{n+1}} t^{-1} \di t \lesssim 1,
\end{split}\]
again by Proposition \ref{prp:small_time_gaussian_heat}. By Theorem \ref{thm:Teolim} and Remark \ref{rem:Teolim_grad}, this proves that $\sum_{n<0}^\infty T_n$
is of weak-type $(1,1)$, bounded on $L^p(G)$ for $p \in (1,2]$ and bounded from $H^1(G)$ to $L^1(G)$. 

In order to conclude, it is enough to show that the convolution kernel $k^{(\infty)}$ of $T^{(\infty)}$ is in $L^1(G)$.
Note that, similarly as above,
\begin{equation}\label{eq:kinfty}
k^{(\infty)} = \int_{1}^{\infty} \vfG_j (\vfG_l h_t)^* \di t .
\end{equation}
Thanks to Proposition \ref{prp:pointwise_heat}, the above integral is absolutely convergent and the convergence is uniform on compact subsets of $G$.

To prove that $k^{(\infty)}$ is in $L^1(G)$ we need to distinguish different cases.

\smallskip

\emph{I. Case $j,l\in\{1,\dots,q\}$.}
By Proposition \ref{prp:gradientestimates_mixed},
\[
\| k^{(\infty)}  \|_1 \leq \int_{1}^{\infty} \| \vfG_j (\vfG_l h_t)^*\|_1 \di t \lesssim \int_{1}^{\infty} t^{-3/2} \di t \lesssim 1.
\]
 
\smallskip

\emph{II. Case $j\in\{1,\dots,\fdim\}$, $l=0$.}
Recall from \eqref{eq:XjX0_dec} the decomposition
\[
\vfG_j (\vfG_0 h_t)^*(z,u) = I_1 + I_2.
\]
By \eqref{eq:kinfty} it is enough to prove that $\int_1^\infty I_1 \di t \in L^1(G)$ and $\int_1^\infty I_2 \di t \in L^1(G)$. 

The estimate $\| I_1\|_1 \lesssim t^{-3/2}$ for $t \geq 1$ is obtained analogously as in the proof of Proposition \ref{prp:gradientestimates_mixed} (here Lemma \ref{lem:innerintegral} with $\alpha=1/2$ and $\beta=3/2$ is used), whence $\int_1^\infty I_1 \di t \in L^1(G)$.
As for the remaining summand,
\[
\int_1^\infty I_2 \di t = -\int_0^\infty \int_1^\infty \partial_\xi [\xi \, \Psi_t(\xi)] \di t \, \exp\left(-\frac{\cosh u}{\xi}\right) \, e^u \vfN_j h^N_{e^u \xi /2}(z) \di\xi
\]
(the integral in $\xi$ and $t$ is absolutely convergent for fixed $z$ and $u$ due to \eqref{eq:I2_absint}, thus changing the order of integration is justified by Fubini's Theorem),
whence, by Lemma \ref{lem:cancellation}\ref{en:cancellation_derintest} and Proposition \ref{prp:n_heat_estimates},
\begin{equation}\label{eq:infinity_integrability_cancellation}
\begin{split}
\left\| \int_1^\infty I_2 \di t \right\|_1 
&\lesssim \int_0^\infty \cosh \theta \exp\left(-\frac{\theta^2}{4}\right) \\
&\quad\times \int_\RR \int_0^\infty \frac{e^{u/2}}{\xi^{2+1/2}}  \exp\left(- \frac{\cosh \theta + \cosh u}{\xi}\right) \di\xi \di u \di \theta \\
&\lesssim \int_0^\infty \frac{\cosh \theta}{e^\theta} \exp\left(-\frac{\theta^2}{4}\right) \di \theta \lesssim 1
\end{split}
\end{equation}
(here Lemma \ref{lem:innerintegral} was applied with $\alpha=\beta=1/2$).
 
\smallskip
 
\emph{III. Case $l\in\{1,\dots,\fdim\}$, $j=0$.}
Notice that $(k^{(\infty)})^*= \int_{1}^{\infty} \vfG_l (\vfG_0 h_t)^* \di t$ and we proved above that $ \int_{1}^{\infty} \vfG_l (\vfG_0 h_t)^* \di t $ is in $L^1(G)$. Thus $k^{(\infty)}$ is integrable also in this case. 

\smallskip
 
\emph{IV. Case $l=j=0$.}
Recall from \eqref{eq:X0X0_dec} the decomposition
\[
\vfG_0 (\vfG_0 h_t)^*(z,u) = J_1+2J_2+J_3.
\]
Note now that, by \eqref{eq:defPsi} and Proposition \ref{prp:n_heat_estimates}, for $t \geq 1$,
\[\begin{split}
\|J_1 \|_1 &\lesssim \int_\RR \int_0^\infty |\Psi_t(\xi)| \, \frac{1}{\xi^2} \, \exp\left(-\frac{\cosh u}{\xi}\right) \di \xi \di u \\
&\lesssim t^{-1/2} \int_0^\infty \exp\left(-\frac{\theta^2}{4t}\right) \sinh \theta \left| \sin \frac{\pi\theta}{2t}\right| \\
&\qquad \times \int_\RR \int_0^\infty \frac{1}{\xi^{2+2}} \, \exp\left(-\frac{\cosh \theta + \cosh u}{\xi}\right) \di\xi \di u \di\theta \\
&\lesssim t^{-1/2} \int_0^\infty \frac{\sinh \theta}{e^\theta} e^{-2\theta} (1+\theta) \frac{\theta}{t} \di\theta \lesssim t^{-3/2}
\end{split}\]
(here Lemma \ref{lem:innerintegral} with $\alpha=0$ and $\beta=2$ was applied), whence $\int_1^\infty J_1 \di t \in L^1(G)$.

As for $J_2$, note that
\[
\int_1^\infty J_2 \di t = \int_0^\infty \int_1^\infty \partial_\xi [\xi \, \Psi_t(\xi)] \di t \, \frac{\cosh u}{\xi} \exp\left(-\frac{\cosh u}{\xi}\right) \, h^N_{e^u \xi /2}(z) \di \xi
\]
(here the integral in $\xi$ and $t$ is absolutely convergent due to \eqref{eq:J123_absint}, so changing the order of integration is justified).
Hence, similarly as in \eqref{eq:infinity_integrability_cancellation}, one can apply Lemma \ref{lem:cancellation}\ref{en:cancellation_derintest} and Proposition \ref{prp:n_heat_estimates}, as well as Lemma \ref{lem:innerintegral} with $\alpha=\beta=1$, to obtain that $\int_1^\infty J_2 \di t \in L^1(G)$.

Finally, 
\[
\int_1^\infty J_3 \di t = \int_0^\infty \int_1^\infty \partial_\xi [\xi \partial_\xi [\xi \, \Psi_t(\xi)] ] \di t \, \exp\left(-\frac{\cosh u}{\xi}\right) \, h^N_{e^u \xi /2}(z) \di \xi
\]
(again, the integral in $\xi$ and $t$ is absolutely convergent due to \eqref{eq:J123_absint}).
Now, from Lemma \ref{lem:cancellation}\ref{en:cancellation_derint} it is immediately obtained that
\begin{multline*}
\int_1^\infty \partial_\xi [\xi \partial_\xi [\xi \Psi_t(\xi)]] \di t = \partial_\xi \left[\xi \int_1^\infty \partial_\xi [\xi \Psi_t(\xi)] \di t \right] \\
= \frac{\xi^{-2}}{\sqrt{4\pi^3}} \int_0^\infty \int_0^\pi \cos \frac{s\theta}{2} \exp\left(\frac{s^2 -\theta^2}{4}-\frac{\cosh \theta}{\xi}\right) \cosh\theta \left[ \frac{\cosh \theta}{\xi} - 1 \right] \di s \di\theta
\end{multline*}
and
\[\begin{split}
\left|\int_1^\infty \partial_\xi [\xi \partial_\xi [\xi \Psi_t(\xi)]] \di t\right| 
&\lesssim  \xi^{-2} \int_0^\infty \exp\left(-\frac{\theta^2}{4}-\frac{\cosh \theta}{\xi}\right) \cosh\theta \di\theta \\
&\qquad +\xi^{-3} \int_0^\infty \exp\left(-\frac{\theta^2}{4}-\frac{\cosh \theta}{\xi}\right) \cosh^2 \theta \di\theta \\
&= S' + S''.
\end{split}\]
Correspondingly
\[\begin{split}
\left| \int_1^\infty J_3 \di t \right| &\leq \int_0^\infty \left| \int_1^\infty \partial_\xi [\xi \partial_\xi [\xi \, \Psi_t(\xi)] ] \di t \right| \, \exp\left(-\frac{\cosh u}{\xi}\right) \, h^N_{e^u \xi /2}(z) \di \xi \\
&\lesssim \int_0^\infty S' \, \exp\left(-\frac{\cosh u}{\xi}\right) \, h^N_{e^u \xi /2}(z) \di \xi \\
&\qquad + \int_0^\infty S'' \, \exp\left(-\frac{\cosh u}{\xi}\right) \, h^N_{e^u \xi /2}(z) \di \xi \\
&= J' + J''.
\end{split}\]
The fact that $J' \in L^1(G)$ is then proved similarly as in \eqref{eq:infinity_integrability_cancellation}, by using Proposition \ref{prp:n_heat_estimates} and Lemma \ref{lem:innerintegral} with $\alpha=\beta=0$. As for $J''$, a similar argument gives
\[\begin{split}
\| J'' \|_1 
&\lesssim \int_0^\infty \cosh^2 \theta \exp\left(-\frac{\theta^2}{4}\right) \\
&\quad\times \int_\RR \int_0^\infty \frac{1}{\xi^{2+1}}  \exp\left(- \frac{\cosh \theta + \cosh u}{\xi}\right) \di\xi \di u \di \theta \\
&\lesssim \int_0^\infty \frac{\cosh^2 \theta}{e^{2\theta}} \exp\left(-\frac{\theta^2}{4}\right) \di \theta \lesssim 1
\end{split}\]
(here Lemma \ref{lem:innerintegral} was applied with $\alpha=0$ and $\beta=1$), and we are done.
\end{proof}
 
\begin{rem}
In view of the estimates of Propositions \ref{prp:gradientestimates_mixed} and \ref{prp:gradientestimates_mixedmixed}, one can prove that, in the case $j,l \in \{1,\dots,\fdim\}$, the operator $T^{(\infty)}$ in the above proof satisfies the size and smoothness assumptions \eqref{eq:czwl1} and \eqref{eq:czhoelder} of Theorem \ref{thm:Teolim} too.
\end{rem}

\section*{Acknowledgements}
The anonymous referee is gratefully thanked for a number
of suggestions that helped to improve the presentation of this work.

The authors are members of the Gruppo Nazionale per l'Analisi Mate\-ma\-tica, la Probabilit\`a e le loro Applicazioni (GNAMPA) of the Istituto Na\-zio\-nale di Alta Matematica (INdAM).
This work was partially supported by the EPSRC Grant ``Sub-Elliptic Harmonic Analysis'' (EP/P002447/1), the Progetto GNAMPA 2017 ``Analisi armonica e teoria spettrale di Laplaciani'' and the Progetto PRIN 2015 ``Variet\`a reali e complesse: geometria, topologia e analisi armonica''.

\end{document}